\numberwithin{equation}{section}
\theoremstyle{plain}
 \newtheorem{theorem}{Theorem}[section]
\theoremstyle{definition} 
 \newtheorem{remark}[theorem]{Remark}
\newtheorem*{theorem*}{Theorem}
\newtheorem{proposition}{Proposition}
\newtheorem{lemma}{Lemma}
\newtheorem{corollary}{Corollary}
\theoremstyle{definition}
\newcommand{\field}[1]{\ensuremath{\mathbb{#1}}}
\newcommand{\CC}{\field{C}}
\newcommand{\HH}{\field{H}}
\newcommand{\RR}{\field{R}}
\newcommand{\ZZ}{\field{Z}}
\DeclareMathOperator{\id}{id}
\DeclareMathOperator{\im}{Im}
\newcommand{\del}{\partial}
\newcommand{\vp}{\varphi}
\newcommand{\delb}{\bar\partial}
\newcommand{\R}{\mathcal{R}}
\newcommand{\g}{\gamma}
\newcommand{\si}{\sigma}
\newcommand{\tr}{\mathrm{tr}}
\newcommand{\bk}{\backslash}
\newcommand{\Ga}{\Gamma}
\newcommand{\la}{\langle}
\newcommand{\ra}{\rangle}
\newcommand{\vep}{\varepsilon}
\newcommand{\curly}[1]{\mathscr{#1}}
\newcommand{\cA}{\curly{A}}
\newcommand{\cE}{\curly{E}}
\newcommand{\cH}{\curly{H}}
\newcommand{\cK}{\curly{K}}
\newcommand{\cN}{\curly{N}}
\newcommand{\cP}{\curly{P}}
\newcommand{\cQ}{\curly{Q}}
\newcommand\smallS{
  \mathchoice
    {{\scriptstyle\mathcal{S}}}% \displaystyle
    {{\scriptstyle\mathcal{S}}}% \textstyle
    {{\scriptscriptstyle\mathcal{S}}}% \scriptstyle
    {\scalebox{.4}[.4]{$\scriptscriptstyle\mathcal{S}$}}%\scriptscriptstyle
  }
\begin{document}
\title[Goldman form and stable vector bundles]{Goldman form, flat connections and stable vector bundles}
\author{Leon A. Takhtajan}

\address{Department of Mathematics,
Stony Brook University, Stony Brook, NY 11794 USA; 
\newline
Euler International Mathematical Institute, Pesochnaya Nab.~10, Saint Petersburg 197022 Russia}
\email{leontak@math.stonybrook.edu}
  \dedicatory{To the Memory of M.S. Narasimhan (1932-2021)}
  \keywords{Stable vector bundle, Narasimhan-Seshadri theorem, flat connections, moduli space, character variety, Eichler integral, Goldman symplectic form, Liouville symplectic form, Riemann-Hilbert correspondence}
\subjclass[2020]{14D20, 32G13, 53D30}

\begin{abstract}
We consider the moduli space $\cN$ of stable vector bundles of degree $0$ over a compact Riemann
surface and the affine bundle $\cA\to\cN$ of flat connections. Following  the similarity between the Teichm\"{u}ller spaces
and the moduli of bundles, we introduce the analogue of the quasi-Fuchsian projective connections --- local holomorphic sections of $\cA$ --- that allow to pull back the Liouville symplectic form on $T^{*}\cN$ to $\cA$. We prove that  the pullback of the Goldman form to $\cA$ by the Riemann-Hilbert correspondence coincides with the pullback of the Liouville form. We also include a simple proof, in the spirit of Riemann bilinear relations, of the classic result --
the pullback of Goldman symplectic form to $\cN$ by the Narasimhan-Seshadri connection is the natural symplectic form on $\cN$, introduced by Narasimhan and
Atiyah \& Bott.
\end{abstract}
\maketitle

\tableofcontents
%%%%%%%%%%%%%%%%%%%%
\section{Introduction}
There is a close similarity between the theory of projective connections on Riemann surfaces, and the theory of flat connections in vector bundles over a Riemann surface (more generally, connections with central curvature), and it is worthwhile to describe it in some detail. 

Namely, the main object in the first theory is a holomorphic affine bundle $\cP_{g}\to T_{g}$ of projective connections over the Teichm\"{u}ller space $T_{g}$ of compact Riemann surfaces of genus $g>1$. Bers simultaneous uniformization determines a family of global holomorphic sections $T_{g}\to\cP_{g}$, allowing to identify $\cP_{g}$ with the holomorphic cotangent bundle $T^{*}T_{g}$ of
$T_{g}$. These sections correspond to quasi-Fuchsian projective connections and are parameterized by the points in $T_{g}$. Remarkably, the pullback to $\cP_{g}$ of the canonical symplectic form on $T^{*}T_{g}$ --- the Liouville form ---  does not depend  on the choice of a quasi-Fuchsian section \cite{Loustau}, as it follows from an important property, called the quasi-Fuchsian reciprocity in \cite{McM, TT}. The monodromy of a projective connection determines a natural map of $\cP_{g}$ to the $\mathrm{PSL}(2,\CC)$-character variety, allowing to pull back the Goldman symplectic form to $\cP_{g}$. As stated in \cite{Kawai}, these two pullbacks give the same (up to a constant) symplectic form on $\cP_{g}$ (see \cite{Tak} for a direct proof).

The Teichm\"{u}ller space $T_{g}$ is also a symplectic manifold with symplectic form given by the K\"{a}hler form of the Weil-Petersson metric.
It is naturally isomorphic to the component of the $\mathrm{PSL}(2,\RR)$-character variety with the maximal Euler class  \cite{Goldman2}, 
and the pullback of the Goldman form to $T_{g}$ is (up to a constant) the Weil-Petersson symplectic form  \cite{Goldman1}. 

The celebrated Narasimhan-Seshadri theorem is an analogue of the Fuchsian uniformization of Riemann surfaces for the stable vector bundles over a compact Riemann surface, and the moduli space $\cN$ of
stable bundles of rank $n$ and degree $0$ naturally has a structure of the $\mathrm{U}(n)$-character variety. The analog of Goldman theorem in \cite{Goldman1} is Theorem \ref{E-S-G} --- a simple statement that  the
pullback of the Goldman form on the $\mathrm{U}(n)$-character variety is the Narasimhan-Atiyah-Bott symplectic form on the module space $\cN$. It is attributed to Goldman and is well-known to the experts. 
For convenience of the reader, we present a simple proof in the spirit of Riemann bilinear relations, which goes back to the classic work of Eichler and Shimura.

Similarly to the Teichm\"{u}ller theory, there is a holomorphic affine bundle 
$\cA\to\cN$ of zero curvature connections in the stable vector bundles, compatible with the complex structure. The Riemann-Hilbert correspondence --- the monodromy map --- maps $\cA$ to the $\mathrm{GL}(n,\CC)$-character variety and allows to pull back the Goldman form to $\cA$.  It is worth mentioning, though we will not use it in the paper, that according to the non-abelian Hodge correspondence (Hitchin-Simpson-Donaldson-Corlette), the $\mathrm{GL}(n,\CC)$-character variety is isomorphic to the  moduli space of stable Higgs bundles of rank $n$ and degree $0$ on a compact Riemann surfaces of genus $g>1$, and the image of $\mathcal{A}$ under the Riemann-Hilbert correspondence is dense.

The Narasimhan-Seshadri theorem provides a canonical section of the bundle $\cA\to\cN$, that establishes the real-analytic isomorphism\footnote{The non-abelian Hodge correspondence establishes a more general isomorphism between the moduli spaces of stable connections and of stable Higgs bundles.} $\cA\simeq T^{*}\cN$.
However, as opposed to the Teichm\"{u}ller theory, the bundle $\cA\to\cN$ has no global holomorphic sections to establish a complex-analytic isomorphism. Nevertheless, one can define a family of 
local holomorphic sections, which we call `quasi-unitary', and consider the family of local pullbacks of the holomorphic Liouville form on $T^{*}\cN$. In Proposition \ref{qu-rec} we prove an analogue of the quasi-Fuchsian reciprocity, the `quasi-unitary reciprocity', which leads to Proposition \ref{independence} that local pullbacks determine a global holomorphic symplectic form on $\cA$.  One can compare this symplectic form on $\cA$ with the pullback of the Goldman form by the Riemann-Hilbert correspondence, and according to Theorem \ref{main} they are the same (up to a constant).

It should be noted that implicitly the Goldman form was already present in the formulation of Theorem 21 in Gunning's 1967 lectures on vector bundles \cite{Gunning}, which was an analogue of the Riemann bilinear relations for the bundle-valued $1$-forms. Goldman form also appears in the theory of integrable systems. Namely, Krichever in \cite{Krichever} introduced a natural symplectic form on the space of Lax operators on a genus $g$ algebraic curve --- the space of meromorphic connections on stable vector bundles of rank $n$ and degree $ng$ over the curve. He showed that on the open subset of the moduli space, consisting of bundles characterized by their respective Tyurin parameters (see \cite{Krichever} for the details and references), the symplectic form on the space of Lax operators is the Liouville form on the cotangent bundle of the moduli space. Its explicit description in terms of the monodromy data is given in Sections 5-6 in \cite{Krichever}, 
and it is not difficult to show that formula (6.9) in \cite{Krichever} coincides with the Goldman symplectic form\footnote{Actually, Krichever's construction \cite{Krichever} is more general and includes the case of irregular singular points and corresponding Stokes' data, which is beyond the realm of algebraic geometry.}. In a recent paper \cite{Bertola}, Krichever's theorem (see \cite[Theorem 6.1]{Krichever}) was reformulated as Theorem  7.4, and rather complicated analytic tools were used for its derivation.

The purpose of the present paper is to describe the similarity between the moduli of curves and the moduli of bundles in a clear and simple way using basic analytic and algebraic geometry methods. In this sense, it complements the foundational papers \cite{NS2, AB, Donaldson} as well as \cite{ZT}.

Here is a more detailed content of the paper. In \S\,\ref{1} we remind necessary basic facts about the moduli space $\cN$ of stable vectors bundles. Specifically,  in \S\,\ref{1.1} we recall the Narasimhan-Seshadri theorem,
and in \S\,\ref{m space} describe the complex structure and the Narasimhan-Atiyah-Bott K\"{a}hler metric on $\cN$. In \S\,\ref{c-coordinates} we review introduced in \cite{ZT}  local complex coordinates on $\cN$. Their description uses differential equation \eqref{dbar} --- a vector bundle analogue of the Beltrami equation in the Teichm\"{u}ller theory, and these complex coordinates are analogous to the Bers coordinates on the Teichm\"{u}ller space. For convenience of the reader we include the proof of Proposition \ref{B-coordinates}. In \S\,\ref{Eichler} we review Eichler integrals for harmonic $(0,1)$-forms, used 
in Lemma \ref{a-lemma} in \S\,\ref{2.5} for the explicit solution of the infinitesimal form of the equation \eqref{dbar}. In \S\,\ref{ch-variety} we briefly discuss the Goldman symplectic form on the character variety, referring to  \cite{Goldman1, Tak} for the details. For convenience of the reader, in \S\,\ref{3-2} we prove Theorem \ref{th: E-S}, the Riemann bilinear relations for the matrix-valued $1$-forms expressed in terms of the Eichler-Shimura periods, and as a corollary obtain Theorem \ref{E-S-G}, that the pullback to $\cN$ of the Goldman form on the $\mathrm{U}(n)$-character variety is the Narasimhan-Atiyah-Bott symplectic form. 

Next, in \S\,\ref{zero-curv} we introduce the affine bundle $\cA\to\cN$ of zero curvature connections. In \S\,\ref{RH} we discuss the Riemann-Hilbert correspondence that maps fibers of $\cA$ to the set of equivalence classes of constant transition functions of a stable vector bundle, and in \S\,\ref{holo-rep} define the local families of bundles with constant transition functions that depend holomorphically on the complex coordinates. This construction is used in \S\,\ref{hol sections} to define, over each coordinate chart, local holomorphic sections of $\cA$, which we call quasi-unitary connections $\smallS_{\si}$, and in Lemma \ref{NS-qu} we express the difference 
between the Narasimhan-Seshadri section $\smallS_{NS}$ and $\smallS_{\si}$ as a $(1,0)$-form on $U$. In \S\,\ref{reciprocity} prove the Proposition \ref{qu-rec}, the quasi-unitary reciprocity, that this difference $\smallS_{NS}-\smallS_{\si}$ is $\del$-closed on $U$. This result is crucial for defining the pullback to $\cA$ of the Liouville symplectic form on $T^{*}\cN$ by using local quasi-unitary sections. Namely, in \S\S\,\ref{hol symplectic}-\ref{L-A}, following \cite{Loustau}, we prove Lemma \ref{unique} --- a criterion when pullbacks of $\omega_{L}$ to $\cA$ by two local sections give the same result, and Proposition \ref{independence} --- local pullbacks of $\omega_{L}$ determine a global symplectic form on $\cA$ that does not depend on a choice of quasi-unitary sections. In \S\,\ref{final} we prove our main result. Specifically, in \S\,\ref{differential} we explicitly compute the differential of the Riemann-Hilbert correspondence, and in \S\,\ref{pullback} we prove Theorem \ref{main} that pullbacks to $\cA$ of Liouville and Goldman symplectic forms are equal (up to a constant). Finally, in \S\,\ref{generalization} we discuss obvious generalizations to the case of stable bundles of arbitrary rank and degree, and to the case of parabolic vector bundles\footnote{One can also extend these results to the moduli spaces of stable Higgs bundles, which is beyond the scope of this paper.}. 

This paper could be written using the modern language of modular stacks, etc. However, since the material  is rather basic and the proofs are simple, we choose a `neoclassical' style to make the paper accessible to a wider audience.
\subsection*{Acknowledgments}I thank Igor Krichever for the explanation of his work on Lax operators on algebraic curves \cite{Krichever} and useful discussion, and Indranil Biswas and Dmitri Orlov for the remarks on non-existence of global holomorphic sections in \S\,\ref{hol sections}. I am grateful to Claudio Meneses for the valuable comments and suggestions for improving the presentation in the paper. I am especially grateful to the anonymous referee for the constructive remarks and suggestions. In particular, the referee advised to include the commutative diagram \eqref{sum-diagram}, which neatly summarizes results of the paper.

%%%%%%%%
\section{Moduli space of flat vector bundles}\label{1}
%%%%%%%%%%%%%%%%%%%%%%
\subsection{Narasimhan-Seshadri theorem} \label{1.1} Let $X$ be a compact Riemann surface of genus $g>1$, and let $E\to X$ be a holomorphic vector bundle of rank $n$ and degree $d$. Denote by 
$\mu(E)=d/n$ the slope of $E$. The bundle $E$ is said to be stable, if for every proper holomorphic subbundle $F$ of $E$ we have
$$\mu(F)<\mu(E).$$
In what follows we will consider stable bundles of degree $0$ --- flat bundles. The theorem of Narasimhan-Seshadri --- an analogue of the Fuchsian uniformization for vector bundles --- states that every stable flat bundle over a Riemann surface $X$ of genus $g>1$ arises from an irreducible unitary representation of 
the fundamental group $\pi_{1}$ of $X$. 

Namely, let $X\simeq\Gamma\bk\HH$,
where 
$$\HH=\{z=x+\sqrt{-1}y\in\CC : y>0\}$$ 
is the Lobachevsky (hyperbolic) plane, and $\Gamma$ is a co-compact Fuchsian group uniformizing the Riemann surface $X$, so that $\Gamma\simeq\pi_{1}$. 
Let $\rho:\Gamma\to \mathrm{U}(n)$ be an irreducible unitary representation of $\Gamma$, and let $\HH\times\CC^{n}\to\HH$ be the trivial bundle with the following $\Gamma$-action: 
$$(z,v)\mapsto (\gamma z,\rho(\gamma)v),\quad \text{where}\quad z\in\HH,\;v\in\CC^{n}\quad\text{and}\quad\gamma\in\Gamma.$$
Denote by $E_{\rho}$ the corresponding quotient bundle $\Gamma\bk(\HH\times\CC^{n})\to\Gamma\bk\HH\simeq X$ --- a holomorphic flat bundle over $X$. 
We have the following key result.

\begin{theorem*}[Narasimhan-Seshadri] A holomorphic vector bundle $E$ over $X$ of rank $n$ and degree $0$
is stable if and only if it is isomorphic to a bundle $E_{\rho}$, where $\rho$ is an irreducible unitary
representation of the group $\Gamma$. The bundles $E_{\rho_{1}}$ and $E_{\rho_{2}}$ are isomorphic if
and only if the representations $\rho_{1}$ and $\rho_{2}$ are equivalent.
\end{theorem*}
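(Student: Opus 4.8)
The plan is to establish the two implications separately. The converse (an irreducible unitary representation yields a stable bundle) and the injectivity statement are comparatively soft, resting on a curvature computation and a uniqueness principle, while the forward direction --- that every stable degree-$0$ bundle is $E_\rho$ for some irreducible unitary $\rho$ --- is the analytic heart, and I would follow the gauge-theoretic route of Donaldson (the original proof of Narasimhan and Seshadri, via Weil's theorem and a careful analysis of a ``minimal'' connection, being the alternative).

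First the easy implication. Given an irreducible unitary $\rho:\Gamma\to\mathrm{U}(n)$, the standard flat Hermitian metric on $\HH\times\CC^n$ is $\Gamma$-invariant and descends to a metric $h$ on $E_\rho$ whose Chern connection is the flat unitary connection with holonomy $\rho$; in particular its curvature vanishes, so $c_1(E_\rho)=0$ and $\deg E_\rho=0$. If $F\subsetneq E_\rho$ is a nonzero holomorphic subbundle, the standard fact that a Hermitian holomorphic subbundle is no more curved than the ambient bundle (Gauss--Codazzi) gives $\deg F=\tfrac{i}{2\pi}\int_X\tr R^{(F,h|_F)}\le 0$, with equality precisely when the second fundamental form $\beta\in A^{1,0}(X,\Hom(F,E_\rho/F))$ vanishes, i.e. when $F$ is parallel --- equivalently, when $F$ arises from a proper $\rho$-invariant subspace $V\subsetneq\CC^n$. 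Irreducibility excludes this, so every proper subbundle has $\deg F<0=\mu(E_\rho)$ and $E_\rho$ is stable; conversely a reducible $\rho$ produces a degree-$0$ invariant subbundle and hence a non-stable $E_\rho$.

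For the forward direction I would reduce stability of $E$ to the existence of a flat unitary connection inducing its holomorphic structure. Fix the underlying $C^\infty$ bundle $\mathbf E$ with a $C^\infty$ Hermitian metric $h_0$, and identify the affine space of $h_0$-unitary connections with the space of Dolbeault operators via $A\mapsto\bpa_A=A^{0,1}$; the complex gauge group $\mathcal G^{\CC}$ acts, and the orbit of $\bpa_E$ consists of the holomorphic structures isomorphic to $E$. Because $\deg E=0$, finding a connection $A$ in this orbit with $F_A=0$ is the same as attaining the infimum $0$ of the Yang--Mills functional $\mathrm{YM}(A)=\int_X|F_A|^2\,\omega_X$ along the orbit. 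I would obtain such an $A$ by running the negative Yang--Mills gradient flow (equivalently Donaldson's heat flow on metrics): short-time existence is standard parabolic theory, and a Weitzenb\"ock identity with a maximum-principle bound on $|F|$ gives long-time existence on the surface $X$. The decisive point is the behaviour as $t\to\infty$: the flow converges, modulo unitary gauge, to a Yang--Mills connection whose underlying holomorphic bundle is the associated graded of the Jordan--H\"older filtration of $E$; since $E$ is stable that filtration is trivial, so the limit is a Hermitian--Einstein connection on $E$ itself, and as $\deg E=0$ this connection is flat. It is irreducible because $E$, being stable, is simple ($\Hom(E,E)=\CC$): a proper invariant subspace of its holonomy would give a proper flat, hence holomorphic, subbundle of degree $0$. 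Its holonomy is thus an irreducible unitary representation $\rho$ with $E\cong E_\rho$.

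Finally, equivalence of data. Equivalent representations give isomorphic bundles by construction. Conversely, a holomorphic isomorphism $\Phi:E_{\rho_1}\xrightarrow{\ \sim\ }E_{\rho_2}$ pulls the flat unitary metric of $E_{\rho_2}$ back to a flat unitary metric on $E_{\rho_1}$; but a given stable (hence simple) holomorphic bundle admits at most one flat unitary connection --- uniqueness of the Hermitian--Einstein metric up to a positive scalar --- so after rescaling $\Phi$ by a positive constant it is a holomorphic isometry intertwining the flat connections, and lifting to $\HH\times\CC^n$ it is multiplication by a constant $U\in\mathrm{U}(n)$ with $U\rho_1U^{-1}=\rho_2$. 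I expect the real obstacle to be the $t\to\infty$ analysis of the flow --- identifying the limit and, when $E$ fails to be stable, reading off a destabilizing subsheaf from the degeneration. This compactness-and-convergence step is exactly where both Donaldson's and Narasimhan--Seshadri's arguments invest their effort, and a self-contained treatment of it would be by far the longest part of the proof.
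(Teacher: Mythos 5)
The paper does not prove this statement at all: it records the Narasimhan--Seshadri theorem as the classical foundation of the whole construction, citing \cite{NS2} for the original proof and \cite{Donaldson} for the gauge-theoretic converse, and then builds on it. So there is no proof in the paper to compare yours against; what can be assessed is whether your outline is a viable reconstruction of a known proof, and it essentially is. Your ``easy'' direction (irreducible unitary $\rho$ implies $E_{\rho}$ stable, via the Gauss--Codazzi inequality $\deg F\le 0$ with equality iff $F$ is parallel) is correct and complete as stated, and your uniqueness argument (uniqueness of the Hermitian--Einstein metric up to a positive scalar on a simple bundle, forcing the lifted isomorphism $\widetilde\Phi:\HH\to\mathrm{GL}(n,\CC)$ to be a constant unitary intertwiner) is the standard one and is sound.

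For the forward direction you are right that all the substance is in the $t\to\infty$ analysis, and you should be aware that you have conflated two distinct arguments. Donaldson's 1983 paper \cite{Donaldson} does not run the Yang--Mills flow: it directly minimizes a functional on metrics, extracts a weakly convergent subsequence, and --- by induction on rank --- reads off a destabilizing subsheaf if the infimum is not attained in the orbit. The flow picture you describe (long-time existence via a maximum principle on $|F|$, convergence modulo gauge to the graded object of the Harder--Narasimhan--Seshadri filtration) is correct but was only established later for curves by Daskalopoulos and R{\aa}de; in particular the identification of the limit with the associated graded, and the fact that for stable $E$ the limiting holomorphic structure is isomorphic to $E$ rather than merely in the closure of its $\mathcal G^{\CC}$-orbit, are precisely the hard compactness statements you are black-boxing. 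As a proof \emph{proposal} this is acceptable provided you flag, as you do, that this step carries the entire analytic weight; as a self-contained proof it is not complete. One further small point: the statement also requires that $E_{\rho}$ stable forces $\rho$ irreducible, which your parenthetical remark about reducible $\rho$ covers, so the biconditional is fully addressed.
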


The standard Hermitian metric in $\CC^{n}$ defines a metric in the trivial bundle $\HH\times\CC^{n}\to\HH$. This metric is $\Gamma$-invariant and determines the Hermitian 
metric $h_{E}$ in the quotient bundle $E$, and the Hermitian metric $h_{\mathrm{End} \,E}$  in the endomorphism bundle $\mathrm{End} \,E$. Corresponding canonical connection $\nabla_{E}=d+A_{E}$ in the Hermitian bundle $(E,h_{E})$ --- a unique connection, compatible with the metric $h_{E}$ and with the 
holomorphic structure in $E$ --- is called the \emph{Narasimhan-Seshadri connection}. It is associated with the trivial connection $\nabla=d+ 0$ in the trivial bundle $\HH\times\CC^{n}\to\HH$, and has zero curvature. Conversely, it was proved by Donaldson \cite{Donaldson}, that
an indecomposable degree $0$ holomorphic vector bundle $E$ over $X$ is stable, if there exists a Hermitian metric  in $E$ such that the corresponding canonical connection has zero curvature.
 
The metric   $h_{\mathrm{End} \,E}$ determines a Hodge $\ast$-operator in the vector space $\Omega^{0,1}(X, \mathrm{End} \,E)$ of smooth $(0,1)$-forms on $X$ with values in $\mathrm{End} \,E$. The Hilbert space $\frak{H}^{0,1}(X, \mathrm{End} \,E)$ is the completion
of $\Omega^{0,1}(X, \mathrm{End} \,E)$ with respect to the Hodge inner product
\begin{equation}\label{Hodge-inner}
\la \mu,\nu\ra=\int_{X}\mu\wedge \ast \nu,
\end{equation}
where $\wedge$ is a composition of the exterior product of differential forms on $X$ and the fiber-wise trace map $\tr$ in $\mathrm{End}\, E$. 
Denote by $\cH^{0,1}(X,\mathrm{End}\, E)$ the zero eigenspace of the corresponding $\bar\del$-Laplace operator in $\frak{H}^{0,1}(X, \mathrm{End} \,E)$, the subspace of harmonic
$\mathrm{End} \,E$-valued $(0,1)$-forms on $X$, and by 
$$P: \frak{H}^{0,1}(X, \mathrm{End} \,E)\to\cH^{0,1}(X,\mathrm{End}\, E)$$ 
--- the corresponding orthogonal projection operator. 

In case $E=E_{\rho}$ the vector space $\Omega^{0,1}(X, \mathrm{End} \,E)$ is naturally identified with the space of smooth $\mathrm{End}\, \CC^{n}$-valued functions on $\HH$ satisfying
\begin{equation}\label{mu-transform}
\mu(\gamma z)\overline{\gamma'(z)}=\mathrm{Ad}\,\rho(\gamma)\mu(z)\quad\text{for all}\quad \gamma\in\Gamma,
\end{equation}
where $\mathrm{Ad}\,\rho$ is the adjoint representation of the group $\Gamma$ in the vector space $\mathrm{End}\, \CC^{n}$.  Correspondingly, $\cH^{0,1}(X,\mathrm{End}\, E)$ is the space of antiholomorphic functions $\mu(z)$ satisfying \eqref{mu-transform}. The Hodge  $\ast$-operator is $\ast \mu=-\sqrt{-1}\mu^{\ast}$, where
$\mu^{\ast}=\bar{\mu}^{\mathrm{t}}$ is the Hermitian conjugation of a $n\times n$-matrix $\mu$, and the Hodge inner product \eqref{Hodge-inner} becomes
\begin{equation*}
\la\mu, \nu\ra=\sqrt{-1}\iint_{F}\tr\left(\mu(z)\nu(z)^{\ast}\right)dz\wedge d\bar{z},
\end{equation*}
where $F$ is a fundamental domain for $\Gamma$ in $\HH$.

%%%%%%%%%%%%%%%%%
\subsection{The moduli space} \label{m space}The moduli space of
stable vector bundles of rank $n$ and degree $0$ over a compact Riemann surface $X$
of genus $g>1$ is the set $\cN$ of isomorphism classes $\{E\}$ of stables bundles $E$ over $X$. By the Narasimhan-Seshadri theorem, $\cN$ coincides with the $\mathrm{U}(n)$-character
variety 
$$\cK=\mathrm{Hom}_{0}(\pi_{1}, \mathrm{U}(n))/\mathrm{U}(n),$$
where it is understood that $\pi_{1}\simeq\Gamma$, and the subscript $0$ stands for the irreducible representations. The moduli space is a 
complex manifold of dimension $d=n^{2} (g - 1) + 1$ (see \cite{NS1,NS2}).  

Specifically, according
to the Kodaira-Spencer theory of deformation of complex structures\footnote{See \cite{NS1,NS2}) for the details in connection with vector bundles.}, 
the holomorphic tangent space $T_{E}\cN$ to the manifold $\cN$ at the point $\{E\}$ corresponding
to a stable bundle $E$ is identified with the complex vector space $\cH^{0,1}(X,\mathrm{End}\, E)$ of harmonic
$(0,1)$-forms. 
The holomorphic cotangent space  $T^{*}_{E}\cN$ is then identified with the complex vector space $\cH^{1,0}(X,\mathrm{End}\, E)$  of harmonic $ (1,0)$-forms, the space of Higgs fields. 
The natural pairing between these vector spaces is given by the integration:  
$$\cH^{1,0}(X,\mathrm{End} \,E)\otimes\cH^{0,1}(X,\mathrm{End} \,E)\ni\theta\otimes\mu\mapsto \int_{X}\theta\wedge\mu\in\CC.$$

The Hermitian inner product \eqref{Hodge-inner} in the vector spaces $\cH^{0,1}(X,\mathrm{End}\, E)$ determines a natural Hermitian metric $ds^{2}$ in $\cN$, introduced by Narasimhan \cite{Narasimhan} in 1969, and later by Atiyah and Bott \cite{AB}. This metric is an analogue of the Weil-Petersson metric in Teichm\"{u}ller spaces, and also is K\"{a}hler \cite{Narasimhan,AB}. We call it the  \emph{Narasimhan-Atiyah-Bott metric}, and denote by $\omega_{NAB}$ its symplectic form, the associated $(1,1)$-form on $\cN$,
$$\omega_{NAB}=-\frac{1}{2}\im ds^{2}.$$

%%%%%%%%%%%%%%%%%%%%
\subsection{Complex coordinates} \label{c-coordinates} It was shown in \cite{ZT}, that is possible to describe the complex structure of the moduli space $\cN$ by introducing complex coordinates in a neighborhood of each point. These coordinates are analogous of the Bers coordinates in Teichm\"{u}ller spaces, and are convenient for performing local computations. 

Namely, realize a stable bundle $E$ as the quotient bundle $E_{\rho}$ corresponding to the irreducible unitary representation $\rho$  of the group $\Ga$, and $\mu\in\Omega^{0,1}(X,\mathrm{End}\, E)$ --- as $\mathrm{End} \,\CC^{n}$-function on $\HH$, satisfying \eqref{mu-transform}. The analog of the Beltrami equation in the Teichm\"{u}ller theory is the following $\bar\del$-problem
\begin{equation}\label{dbar}
\frac{\del}{\del\bar{z}} f(z)= f(z)\mu(z),\quad z\in\HH.
\end{equation}
The next result was proved in \cite{ZT}. For convenience of the reader, we present here its simple proof.
\begin{proposition}\label{B-coordinates}
If $\mu\in\cH^{0,1}(X,\mathrm{End}\, E)$ is sufficiently close to zero, then equation \eqref{dbar} has is a unique solution $f^{\mu}:\HH\to\mathrm{GL}(n,\CC)$ 
with the following properties.
\begin{itemize}
\item[(i)] For each $\g\in\Ga$
$$f^{\mu}(\g z) =\rho^{\mu}(\g)f^{\mu}(z)\rho(\g)^{-1},$$  
where $\rho^{\mu}$ is an irreducible representation of the group $\Ga$ in $\mathrm{U}(n)$.
\item[(ii)] For a fixed $z_{0}\in\HH$ (say $z_{0}=\sqrt{-1}$) the matrix $f^{\mu}(z_{0})$ is positive definite and $\det f^{\mu}(z_{0})=1$.
\end{itemize}
\end{proposition}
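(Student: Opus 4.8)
The plan is to proceed in close analogy with the classical theory of the Beltrami equation. First I would set up the equation \eqref{dbar} as a fixed-point problem. Viewing $\mu$ as an $\mathrm{End}\,\CC^n$-valued $(0,1)$-form on the compact surface $X$, the Cauchy-Green operator (or, intrinsically, the $\bar\del$-Green operator on $X$ twisted by a suitable reference structure) provides a right inverse $T$ to $\del/\del\bar z$ with a gain of regularity and operator norm controllable on Sobolev or Hölder spaces. Writing $f = I + g$, equation \eqref{dbar} becomes $\del g/\del\bar z = (I+g)\mu$, i.e. $g = T\big((I+g)\mu\big)$; for $\|\mu\|$ small the map $g\mapsto T\big((I+g)\mu\big)$ is a contraction on a small ball, so there is a unique small solution, and elliptic regularity gives smoothness of $f^\mu$ on $\HH$. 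One must then argue that $f^\mu(z)\in\mathrm{GL}(n,\CC)$ pointwise: since $f^\mu$ is $C^1$-close to the constant map $I$, its determinant is close to $1$ and hence nonvanishing — this is where smallness of $\mu$ is genuinely used.

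Next I would establish the equivariance (i). Fix $\g\in\Ga$ and compare $f^\mu(\g z)$ with $f^\mu(z)$. Using the transformation law \eqref{mu-transform} for $\mu$, a direct computation shows that the matrix-valued function $z\mapsto f^\mu(\g z)\,\rho(\g)\,f^\mu(z)^{-1}$ is annihilated by $\del/\del\bar z$, hence is holomorphic in $z$; since both $f^\mu(\g z)$ and $f^\mu(z)$ are bounded data over a fundamental domain, this holomorphic function is in fact constant in $z$ (it descends to a holomorphic $\mathrm{GL}(n,\CC)$-valued function on the compact $X$, or one checks periodicity directly). Call this constant $\rho^\mu(\g)$. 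The cocycle relation $\rho^\mu(\g_1\g_2)=\rho^\mu(\g_1)\rho^\mu(\g_2)$ follows by applying the definition twice, so $\rho^\mu$ is a representation of $\Ga$ into $\mathrm{GL}(n,\CC)$. That $\rho^\mu$ can be arranged to take values in $\mathrm{U}(n)$ uses that $E_\rho$ is stable for $\mu$ small (stability is an open condition, and the bundle deformed by a harmonic $\mu$ stays stable), so by the Narasimhan-Seshadri theorem it comes from a unitary representation; one then identifies that unitary representation with $\rho^\mu$ up to conjugation, which amounts to choosing the Hermitian-metric normalization correctly. Irreducibility of $\rho^\mu$ follows from irreducibility of $\rho$ together with continuity, again for $\mu$ small.

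Finally, the normalization (ii) is obtained by exploiting the freedom in the solution. The equation \eqref{dbar} is invariant under $f\mapsto C f$ for any constant $C\in\mathrm{GL}(n,\CC)$ (this changes $\rho^\mu(\g)$ to $C\rho^\mu(\g)C^{-1}$ but not the equivariance type), so I would fix this ambiguity by requiring $f^\mu(z_0)$ to be positive definite with $\det f^\mu(z_0)=1$: starting from any solution $\tilde f$, write the polar-type decomposition and rescale by the appropriate scalar so that the determinant at $z_0$ is $1$; the positive-definiteness normalization then picks out a unique representative, matching the normalization implicit in realizing $E$ as $E_\rho$ with the standard metric. Uniqueness of $f^\mu$ subject to (i) and (ii) follows because two solutions differ by a left constant factor, and (ii) pins that factor down to the identity.

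\textbf{Main obstacle.} The routine part is the fixed-point existence and regularity. The subtler point is item (i): showing that the naturally-defined cocycle $\rho^\mu$ actually lands in $\mathrm{U}(n)$ (not merely $\mathrm{GL}(n,\CC)$) and is irreducible. This is not formal — it relies on invoking the Narasimhan-Seshadri theorem for the deformed bundle together with openness of stability, and on checking that the metric normalization in (ii) is exactly the one that makes the monodromy unitary rather than merely conjugate to a unitary representation. Keeping track of precisely which Hermitian structure on $E_\rho$ is being used, and reconciling it with the solution $f^\mu$ of \eqref{dbar}, is where the argument needs care.
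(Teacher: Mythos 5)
Your proposal misses the key structural fact that the paper's proof turns on: since $\mu\in\cH^{0,1}(X,\mathrm{End}\,E)$ is \emph{harmonic}, it is an anti-holomorphic matrix-valued function on $\HH$, so \eqref{dbar} is not a genuine $\bar\del$-problem at all but an ordinary differential equation in $\bar z$. It therefore has a global, explicitly normalized anti-holomorphic fundamental solution $f_{\mu}$ with $f_{\mu}(z_{0})=I$, automatically invertible (its determinant solves a scalar linear ODE), with no smallness, no Cauchy--Green operator and no contraction mapping needed for existence. Your fixed-point setup has real problems as stated: on the compact surface $X$ the unknown $f$ is \emph{not} a section of $\mathrm{End}\,E_{\rho}$ (it intertwines $E_{\rho}$ with the as-yet-unknown deformed bundle), so the twisted $\bar\del$-Green operator on $X$ cannot be applied to $g=f-I$; and if one nevertheless poses $\bar\del g=(I+g)\mu$ in $\Omega^{0,1}(X,\mathrm{End}\,E_{\rho})$, the right-hand side at $g=0$ is the harmonic form $\mu$ itself, so the equation is obstructed exactly by the cohomology you are trying to deform. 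Working instead on $\HH$ is not routine either, since $|\mu(z)|$ grows like $y^{-1}$ near $\partial\HH$.

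The second genuine gap is in (i). For a general solution of \eqref{dbar} the holomorphic function $z\mapsto f(\g z)\rho(\g)f(z)^{-1}$ is \emph{not} constant: left-multiplying any solution by a non-constant holomorphic matrix function yields another solution whose multiplier system is visibly non-constant, and your justification (that it ``descends to a holomorphic function on compact $X$'') does not hold for a function of $z$ at fixed $\g$. Constancy is automatic only for the normalized ODE solution $f_{\mu}$, by uniqueness of the initial value problem together with \eqref{mu-transform}; this yields a representation $\rho_{\mu}$ into $\mathrm{GL}(n,\CC)$, and the solution with \emph{unitary} multipliers must then be manufactured, as the paper does, by setting $f^{\mu}=g^{\mu}f_{\mu}$ where $g^{\mu}$ is the holomorphic isomorphism $E_{\rho_{\mu}}\simeq E_{\rho^{\mu}}$ supplied by the Narasimhan--Seshadri theorem (applicable because stability is open, which is the true role of the smallness hypothesis --- not the nonvanishing of $\det f$). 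You correctly identified that Narasimhan--Seshadri plus openness of stability is where unitarity comes from, and your uniqueness argument (two solutions differ by a holomorphic intertwiner of stable bundles, hence a scalar multiple of a constant unitary matrix, pinned to $I$ by (ii)) matches the paper's; but the existence and equivariance steps as written would not go through.
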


\begin{proof}
Since $\mu\in\cH^{0,1}(X,\mathrm{End}\, E)$ is an anti-holomorphic matrix-valued function on $\HH$, the ordinary differential equation 
\begin{equation}\label{ode}
\frac{ df}{d\bar{z}}= f(z)\mu(z)
\end{equation}
has a unique anti-holomorphic solution --- the $n\times n$ matrix-valued function $f_{\mu}(z)$ normalized by $f_{\mu}(z_{0})=I$, where $I$ is the identity matrix. It follows from \eqref{mu-transform} that
$$f_{\mu}(\g z) =\rho_{\mu}(\g)f_{\mu}(z)\rho(\g)^{-1}\quad\text{for all}\quad \g\in\Ga,$$ 
where $\rho_{\mu}:\Ga\to\mathrm{GL}(n,\CC)$ is a representation of the group $\Gamma$.
 For sufficiently small $\mu$ the quotient bundle
$E_{\rho_{\mu}}=\Ga\bk(\HH\times\CC^{n})$ associated with the representation $\rho_{\mu}$ is stable, so representation $\rho_{\mu}$ is irreducible. According to  the Narasimhan-Seshadri theorem,  there
is an irreducible representation $\rho^{\mu}: \Gamma\to\mathrm{U}(n)$ such that $E_{\rho_{\mu}}\simeq E_{\rho^{\mu}}$. In other words, there is holomorphic mapping
$g^{\mu}:\HH\to\mathrm{GL}(n,\CC)$, such that 
$$g^{\mu}(\g z) =\rho^{\mu}(\g)g^{\mu}(z)\rho_{\mu}(\g)^{-1}\quad\text{for all}\quad \g\in\Ga,$$
and we put $f^{\mu}(z)=g^{\mu}(z)f_{\mu}(z)$. 

The proof of uniqueness is also easy. Suppose $f_{1}(z)$ and $f_{2}(z)$ are two solutions of \eqref{dbar} satisfying property (i) with unitary representations $\rho_{1}$ and $\rho_{2}$. Then $h(z)=f_{1}(z)f_{2}(z)^{-1}$ is a holomorphic function satisfying $h(\gamma z)=\rho_{1}(\gamma)h(z)\rho_{2}(\gamma)^{-1}$. It determines an isomorphism $E_{\rho_{1}}\simeq E_{\rho_{2}}$, and by the Narasimhan-Seshadri theorem, $h(z)$ is a scalar
multiple of a unitary matrix; it follows form the property (ii) that $h(z)=I$.
\end{proof}

Now for each stable bundle $E=E_{\rho}$ choose a basis $\mu_{1},\dots,\mu_{d}$ in the vector space $\cH^{0,1}(X,\mathrm{End}\, E)$, and put
$\mu=\vep_{1}\mu_{1}+\cdots+\vep_{d}\mu_{d}$. According to Proposition \ref{B-coordinates}, this introduces complex coordinates $(\vep_{1},\dots,\vep_{d})$ in the coordinate chart at the point $\{E\}\in\cN$ --- a neighborhood $U$ of $\{E\}$ --- determined by the condition that $\mu$ is sufficiently close to zero. Indeed, for different $\mu$ representations $\rho^{\mu}$ are not equivalent, as it follows from the general Kodaira-Spencer theory. It can be also verified directly using Corollary \ref{cor-0}, as in \cite[Ch. VI D]{Ahlfors-2} for the Teichm\"{u}ller space. These coordinates transform holomorphically and endow $\cN$ with the structure of a complex manifold. 

Specifically, the differential of such coordinate 
transformation in the intersection of the coordinate charts at $\{E_{\rho}\}$ and $\{E_{\rho^{\mu}}\}$ is a linear mapping of vector spaces $\cH^{0,1}(X,\mathrm{End}\, E_{\rho})$ and $\cH^{0,1}(X,\mathrm{End}\, E_{\rho^{\mu}})$, explicitly given by the formula
$$\cH^{0,1}(X,\mathrm{End}\, E_{\rho})\ni\nu\mapsto P_{\mu}(\mathrm{Ad}f^{\mu}(\nu))\in\cH^{0,1}(X,\mathrm{End}\, E_{\rho^{\mu}}).$$
Here $P_{\mu}$ is the orthogonal projection operator onto $\cH^{0,1}(X,\mathrm{End}\, E_{\rho^{\mu}})$, and
$\mathrm{Ad}\,f^{\mu}$ is a fiberwise linear mapping $\mathrm{End}\, E_{\rho}\to\mathrm{End}\, E_{\rho^{\mu}}$, where $\mathrm{Ad}\,f^{\mu}(\nu)=f^{\mu}\cdot\nu\cdot(f^{\mu})^{-1}$.
Corresponding vector fields $\del/\del\vep_{i}$ on $U$ at each point $\{E_{\rho^{\mu}}\}\in U$ 
are given by
$$\left.\frac{\del}{\del\vep_{i}}\right|_{\mu}=P_{\mu}(\mathrm{Ad}\,f^{\mu}(\mu_{i}))\in \cH^{0,1}(X,\mathrm{End}\, E_{\rho^{\mu}}),\quad i=1,\dots,d.$$

In complex coordinates we have the following expression for the Narasimhan-Atiyah-Bott symplectic form, 
\begin{equation}\label{anb-complex}
\omega_{NAB}\left(\frac{\del}{\del\vep_{\mu}},\frac{\del}{\del\bar\vep_{\nu}}\right)=\frac{\sqrt{-1}}{2}\la \mu,\nu\ra,
\end{equation}
where $\dfrac{\del}{\del\vep_{\mu}}$ and $\dfrac{\del}{\del\bar\vep_{\nu}}$ are the holomorphic and antiholomorphic tangent vectors at $\{E\}\in\cN$ corresponding to $\mu,\nu\in\cH^{0,1}(X,\mathrm{End}\, E)$ respectively.

\begin{remark}\label{hol-reps}
Note that unitary representations $\rho^{\mu}$  do not depend holomorphically on $\mu=\vep_{1}\mu_{1}+\cdots+\vep_{d}\mu_{d}$. However, representations $\rho_{\mu}$, constructed in the proof of Proposition \ref{B-coordinates}, depend holomorphically on complex coordinates $(\vep_{1},\dots,\vep_{d})$ in $U$. 
\end{remark}

%%%%%%%%%%%%%%%%%%%%%%%%%%
\subsection{Eichler integrals}\label{Eichler} Here we briefly review Eichler integrals of weight $0$, referring to \cite{Meneses} for the general theory and further references. 
Namely, for harmonic $\mu\in \cH^{0,1}(X,\mathrm{End}\, E)$ let $\cE$ be the corresponding Eichler integral --- the anti-holomorphic matrix-valued function on $\HH$, defined by
$$\cE(z)=\int_{z_{0}}^{z}\mu(\zeta)d\bar\zeta.$$
It follows from \eqref{mu-transform} that it satisfies
$$\cE(\gamma z)=\mathrm{Ad}\,\rho(\gamma)\cE(z)+\vp(\gamma),$$
where $n\times n$ complex matrices $\vp(\gamma)$ are the Eichler-Shimura periods of $\mu$,
$$\vp(\gamma)=\int_{z_{0}}^{\gamma z_{0}}\mu(z)d\bar{z}, \quad\gamma\in\Gamma.$$
The Eichler-Shimura periods satisfy
\begin{equation}\label{E-S}
\vp(\gamma_{1}\gamma_{2})=\vp(\gamma_{1})+ \mathrm{Ad}\,\rho(\gamma_{1})\vp(\gamma_{2}),\quad\text{for all}\quad\gamma_{1}, \gamma_{2}\in\Gamma,
\end{equation}
so $\vp\in Z^{1}(\Gamma,\frak{g}_{\mathrm{Ad}\,\rho})$, the space of $1$-cocycles for the group $\Gamma$ with values in the Lie algebra $\frak{g}=\frak{gl}(n,\CC)$ --- a left $\Ga$-module with respect to the adjoint action $\mathrm{Ad}\,\rho$  (see \S\,\ref{goldman}). We have the period map
$$\cH^{0,1}(X,\mathrm{End}\, E)\ni\mu \mapsto\cP(\mu)=[\vp]\in H^{1}(\Gamma,\frak{g}_{\mathrm{Ad}\,\rho}),$$
where $[\vp]$ is the cohomology class of a cocycle $\vp$. 

Similarly, for harmonic $\theta\in \cH^{1,0}(X,\mathrm{End}\, E)$ the Eichler integral $\Theta$ and Eichler-Shimura periods $\psi(\g)$ are defined as
$$\Theta(z)=\int_{z_{0}}^{z}\theta(\zeta)d\zeta\quad\text{and}\quad\Theta(\gamma z)=\mathrm{Ad}\,\rho(\gamma)\Theta(z)+ \psi(\g).$$
If $\theta=\mu^{*}$, then $\psi(\g)=\vp(\g)^{*}$.

The following result is well-known (see \cite{NS1,Gunning} and especially \cite{Meneses}). For convenience of the reader we present a simple proof.

\begin{lemma} \label{iso-period} The period map $\cP$ for a stable bundle $E$ establishes the isomorphism
$$\cH^{1,0}(X,\mathrm{End}\, E)\oplus\cH^{0,1}(X,\mathrm{End}\, E)\simeq H^{1}(\Gamma,\frak{g}_{\mathrm{Ad}\,\rho}).$$
Moreover, if for all $\gamma\in\Gamma$ the Eichler-Shimura periods $\vp(\gamma)\in\frak{u}(n)$ --- the Lie algebra of $\mathrm{U}(n)$ --- then $\cE=0$.
\end{lemma}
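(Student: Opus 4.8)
The plan is to reduce the lemma to the following elementary fact, applied via Stokes' theorem in the spirit of the Riemann bilinear relations: on the compact surface $X$, an $\mathrm{End}\, E$-valued $1$-form that is $\nabla_{E}$-closed and at the same time equals $\nabla_{E}$ of a smooth global section must vanish.

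First I would verify that the period map makes sense on the direct sum. Lifting $\mu\in\cH^{0,1}(X,\mathrm{End}\, E)$ to an antiholomorphic matrix-valued function on $\HH$ shows that $\mu$, viewed as an $\mathrm{End}\, E$-valued $(0,1)$-form on $X$, is $\nabla_{E}$-closed; likewise each $\theta\in\cH^{1,0}(X,\mathrm{End}\, E)$ gives a $\nabla_{E}$-closed $(1,0)$-form, and $\vp,\psi\in Z^{1}(\Gamma,\frak{g}_{\mathrm{Ad}\,\rho})$ by \eqref{E-S}, so $(\theta,\mu)\mapsto[\psi+\vp]$ is a well-defined homomorphism $\cP\colon\cH^{1,0}(X,\mathrm{End}\, E)\oplus\cH^{0,1}(X,\mathrm{End}\, E)\to H^{1}(\Gamma,\frak{g}_{\mathrm{Ad}\,\rho})$. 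Then I would compare dimensions: the source has complex dimension $2d=2n^{2}(g-1)+2$, and the standard Euler-characteristic identity for the surface group $\Gamma$—combined with Poincar\'e duality, which gives $\dim H^{2}=\dim H^{0}$, and Schur's lemma, which gives $H^{0}(\Gamma,\frak{g}_{\mathrm{Ad}\,\rho})=\frak{g}^{\Gamma}=\CC\cdot I$ since $\rho$ is irreducible—yields $\dim_{\CC}H^{1}(\Gamma,\frak{g}_{\mathrm{Ad}\,\rho})=2+(2g-2)n^{2}=2d$. (Alternatively one could quote Hodge theory for the Hermitian flat bundle $\mathrm{End}\, E$ on the compact Riemann surface $X$, which identifies $H^{1}(\Gamma,\frak{g}_{\mathrm{Ad}\,\rho})$ with the space of harmonic $\mathrm{End}\, E$-valued $1$-forms and its decomposition into types.) So it remains to prove that $\cP$ is injective.

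For injectivity, suppose $(\theta,\mu)\in\ker\cP$, i.e. $\psi(\g)+\vp(\g)=\mathrm{Ad}\,\rho(\g)B-B$ for some $B\in\frak{g}$. Put $F(z)=\Theta(z)+\cE(z)+B$; the transformation rules for the Eichler integrals give $F(\g z)=\mathrm{Ad}\,\rho(\g)F(z)$, so $F$ is a smooth global section of $\mathrm{End}\, E$ with $\nabla_{E}F=\theta+\mu$. Pairing with $\mu^{\ast}\in\cH^{1,0}(X,\mathrm{End}\, E)$, which is $\nabla_{E}$-closed, and noting that $\mu^{\ast}\wedge\nabla_{E}F$ is exact (the connection term drops out under the fiberwise trace, $\mu^{\ast}\wedge\nabla_{E}F=-\,d\,\tr(\mu^{\ast}F)$), Stokes' theorem on $X$ gives $\int_{X}\mu^{\ast}\wedge\nabla_{E}F=0$; since $\mu^{\ast}\wedge\theta$ vanishes for type reasons, $\int_{X}\mu^{\ast}\wedge\mu=0$, which by \eqref{Hodge-inner} equals $-\sqrt{-1}\,\la\mu,\mu\ra$ and therefore forces $\mu=0$. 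Now $\nabla_{E}F=\theta$, and the same computation with $\theta^{\ast}\in\cH^{0,1}(X,\mathrm{End}\, E)$ in place of $\mu^{\ast}$ forces $\theta=0$. This establishes injectivity and hence the isomorphism. For the last assertion: if $\vp(\g)\in\frak{u}(n)$ for all $\g$, then $\vp(\g)^{\ast}=-\vp(\g)$, so taking $\theta=\mu^{\ast}$—for which $\psi(\g)=\vp(\g)^{\ast}$—the cocycle $\psi+\vp$ is identically $0$, whence $(\mu^{\ast},\mu)\in\ker\cP$; injectivity gives $\mu=0$, and therefore $\cE=0$.

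The step I expect to be the main obstacle is the bookkeeping in the injectivity argument: fixing the sign of the constant $B$ so that $F$ is genuinely $\mathrm{Ad}\,\rho$-equivariant, checking the trace identity that makes $\mu^{\ast}\wedge\nabla_{E}F$ exact, and confirming that $\int_{X}\mu^{\ast}\wedge\mu$ carries the expected sign with the conventions of \S\,\ref{1.1}. Everything else is routine.
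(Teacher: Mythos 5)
Your proof is correct and follows essentially the same route as the paper: kill the kernel of $\cP$ by assembling the Eichler integrals into a global $\mathrm{Ad}\,\rho$-equivariant primitive of $\theta+\mu$, then conclude by a dimension count. The only substantive difference is that where the paper invokes the constancy of a harmonic $\mathrm{Ad}\,\rho$-invariant matrix-valued function (citing \cite[Prop.~4.2]{NS1}), you prove that vanishing directly by pairing $\nabla_{E}F$ against $\mu^{*}$ and $\theta^{*}$ and applying Stokes' theorem --- a self-contained substitute resting on the same positivity $\la\mu,\mu\ra\geq 0$; your sign, equivariance and duality checks (the choice of $B$, the exactness of $\tr(\mu^{*}F)\,dz$, and $\dim H^{2}=\dim H^{0}$ via the invariant form $B$) all work out with the paper's conventions.
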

\begin{proof} If $\vp(\g)=\mathrm{Ad}\,\rho(\g)v-v$ for some $v\in\frak{g}$, then $\cE(z)+v$ is $\mathrm{Ad}\,\rho$-invariant Eichler integral. Since the representation $\rho$ is unitary, 
$\cE(z)+v$ is constant and $\mu=0$. If $\cP(\mu)=\cP(\theta)$, then $\cE-\Theta$ is harmonic 
$\mathrm{Ad}\,\rho$-invariant matrix-valued function on $\HH$. Since the representation $\rho$ is unitary, $\cE-\Theta$ is constant (see \cite[Prop.~4.2]{NS1}) and $\mu=\theta=0$.
The complex vector spaces $\cH^{1,0}(X,\mathrm{End}\, E)\oplus\cH^{0,1}(X,\mathrm{End}\, E)$ and $H^{1}(\Gamma,\frak{g}_{\mathrm{Ad}\,\rho})$ for a stable bundle $E$ have the
same dimension, so $\cP$ is an isomorphism. 

If $\vp(\gamma)\in\frak{u}(n)$, then $\cE+\cE^{*}$ is harmonic 
$\mathrm{Ad}\,\rho$-invariant matrix-valued function on $\HH$, and as above $\mu=0$. 
\end{proof}

%%%%%%%%%%%%%%%%%%%
\subsection{Infinitesimal deformations}\label{2.5}
For $\mu\in \cH^{0,1}(X,\mathrm{End}\, E)$ and for small enough $t\in\RR$ and $z\in\HH$ put 
$$\dot{f}^{\mu}(z)=\left.\frac{d}{dt}\right|_{t=0}f^{t\mu}(z), \quad\dot{f}_{\mu}(z)=\left.\frac{d}{dt}\right|_{t=0}f_{t\mu}(z),\quad\text{and}\quad \dot{g}^{\mu}(z)=\left.\frac{d}{dt}\right|_{t=0}g^{t\mu}(z).$$

The following simple result is a vector bundle analog of the classical Ahlfors' result --- formula (1.21) in \cite{Ahlfors} --- for the variation of a family of quasi-conformal mappings with harmonic Beltrami differential.
\begin{lemma} \label{a-lemma} For $\mu\in \cH^{0,1}(X,\mathrm{End}\, E)$ we have
$$\dot{f}^{\mu}(z)=\cE(z)-\cE(z)^{\ast}\quad\text{and}\quad \dot{g}^{\mu}(z)=-\cE(z)^{\ast},$$
where $\cE$ is the Eichler integral of $\mu$.
\end{lemma}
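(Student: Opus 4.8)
The plan is to differentiate the defining relations at $t=0$ and identify the resulting quantities using the normalization conditions and Lemma~\ref{iso-period}. First I would tackle $\dot f_{\mu}$. Differentiating the ODE \eqref{ode} for $f_{t\mu}$ in $t$ at $t=0$, and using $f_{0}=I$, gives $\frac{\del}{\del\bar z}\dot f_{\mu}(z)=\mu(z)$, so $\dot f_{\mu}(z)=\cE(z)$ up to an additive constant; the normalization $f_{t\mu}(z_{0})=I$ for all $t$ forces $\dot f_{\mu}(z_{0})=0$, and since $\cE(z_{0})=0$ by definition of the Eichler integral, we get $\dot f_{\mu}(z)=\cE(z)$ exactly. (This is the trivial part and mirrors Ahlfors' computation.)

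Next I would analyze $\dot g^{\mu}$. Here $g^{\mu}$ is only holomorphic, not explicitly given, so I cannot integrate an ODE; instead I must extract it from the equivariance relations. Write $f^{\mu}=g^{\mu}f_{\mu}$, so $\dot f^{\mu}=\dot g^{\mu}+\cE$ (using $g^{0}=I$, $f_{0}=I$). Differentiating the cocycle-type relation $f^{\mu}(\g z)=\rho^{\mu}(\g)f^{\mu}(z)\rho(\g)^{-1}$ at $t=0$, and using that $\rho^{t\mu}$ is a path of \emph{unitary} representations through $\rho$, one gets
$$\dot f^{\mu}(\g z)=\mathrm{Ad}\,\rho(\g)\,\dot f^{\mu}(z)+u(\g),\qquad u(\g)=\dot\rho^{\mu}(\g)\rho(\g)^{-1}\in\frak u(n),$$
since the derivative of a curve of unitary matrices at the identity component lands in the Lie algebra $\frak u(n)$ after right translation. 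Thus $\dot f^{\mu}$ is an Eichler-type integral whose periods are $\frak u(n)$-valued. On the other hand $\dot f^{\mu}=\dot g^{\mu}+\cE$ with $\dot g^{\mu}$ holomorphic and $\cE$ antiholomorphic; I claim $\dot f^{\mu}+(\dot f^{\mu})^{*}$ is then a harmonic $\mathrm{Ad}\,\rho$-invariant matrix-valued function on $\HH$. Indeed its periods are $u(\g)+u(\g)^{*}=0$, and $\dot f^{\mu}+(\dot f^{\mu})^{*}=\dot g^{\mu}+(\dot g^{\mu})^{*}+\cE+\cE^{*}$, which is the sum of a holomorphic, an antiholomorphic, and (by $\del\bar\del$ of the pieces) harmonic terms — so it is harmonic. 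By the argument in Lemma~\ref{iso-period} (unitarity of $\rho$ forces $\mathrm{Ad}\,\rho$-invariant harmonic functions to be constant), $\dot f^{\mu}+(\dot f^{\mu})^{*}$ is a constant matrix; evaluating at $z_{0}$ and using the normalization from property (ii) — $f^{t\mu}(z_{0})$ is positive definite with $\det=1$ for all $t$, hence $\dot f^{\mu}(z_{0})$ is Hermitian and traceless, so $\dot f^{\mu}(z_{0})+\dot f^{\mu}(z_{0})^{*}=2\dot f^{\mu}(z_{0})$ — gives that the constant is $2\dot f^{\mu}(z_{0})$. A separate evaluation, differentiating $\det f^{t\mu}(z_{0})=1$ and $f^{t\mu}(z_{0})>0$, should pin $\dot f^{\mu}(z_{0})=0$; combined with $\cE(z_{0})=0$ we conclude $\dot f^{\mu}+(\dot f^{\mu})^{*}\equiv 0$, i.e. $\dot f^{\mu}$ is anti-Hermitian-valued in the sense $\dot f^{\mu}=-(\dot f^{\mu})^{*}$. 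Splitting $\dot f^{\mu}=\dot g^{\mu}+\cE$ into holomorphic and antiholomorphic parts and matching against $-(\dot g^{\mu})^{*}-\cE^{*}$ then yields $\dot g^{\mu}=-\cE^{*}$ and therefore $\dot f^{\mu}=\cE-\cE^{*}$.

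The main obstacle is the bookkeeping around the normalization: one must be careful that property (ii) is imposed for \emph{every} $t$, so that differentiating it at $t=0$ produces the right linear constraints on $\dot f^{\mu}(z_{0})$ (Hermitian from positivity, traceless from $\det=1$), and then that these constraints, together with $\cE(z_{0})=0$, are exactly enough to kill the constant of integration. The other delicate point is justifying that $\dot\rho^{\mu}(\g)\rho(\g)^{-1}\in\frak u(n)$, which requires knowing that $t\mapsto\rho^{t\mu}$ can be taken to be a differentiable path in $\mathrm{Hom}(\Gamma,\mathrm U(n))$ through $\rho$ — this follows from the smooth dependence in the Narasimhan-Seshadri correspondence (equivalently, from the fact that $\rho^{t\mu}$ is the unitary representation attached to the Hermitian metric solving the associated variational problem, which depends smoothly on $t$). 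Once those two points are in hand, the rest is the routine $\del/\bar\del$-decomposition argument above.
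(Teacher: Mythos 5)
Your argument follows the paper's proof essentially line for line: differentiate the $\bar\del$-equation to get $\dot f_{\mu}=\cE$ and $\dot f^{\mu}=\cE+(\text{holomorphic})$, differentiate the equivariance relation to see that the periods $\dot\rho^{\mu}(\g)\rho(\g)^{-1}$ lie in $\frak{u}(n)$, conclude that $\dot f^{\mu}+(\dot f^{\mu})^{*}$ is an $\mathrm{Ad}\,\rho$-invariant harmonic matrix-valued function and hence constant, and finally kill the constant using the normalization (ii). The one step that fails as you state it is the claim that ``a separate evaluation, differentiating $\det f^{t\mu}(z_{0})=1$ and $f^{t\mu}(z_{0})>0$, should pin $\dot f^{\mu}(z_{0})=0$.'' Differentiating those two conditions only gives that $\dot f^{\mu}(z_{0})$ is Hermitian and traceless, which does not force it to vanish (consider $\mathrm{diag}(1,-1)$). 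The missing ingredient is the irreducibility of $\rho$: an $\mathrm{Ad}\,\rho$-invariant \emph{constant} matrix commutes with every $\rho(\gamma)$, so by Schur's lemma the constant $\dot f^{\mu}+(\dot f^{\mu})^{*}\equiv 2\dot f^{\mu}(z_{0})$ must equal $cI$; tracelessness then gives $c=0$, and only then do you obtain $\dot f^{\mu}(z_{0})=0$. This is exactly how the paper closes the argument (it writes the invariant harmonic function as $cI$ and uses $\tr\dot f^{\mu}(z_{0})=0$). With that one line inserted, the remainder of your proof --- the splitting into holomorphic and antiholomorphic parts, each constant and vanishing at $z_{0}$, hence $\dot g^{\mu}=-\cE^{*}$ and $\dot f^{\mu}=\cE-\cE^{*}$ --- is correct and coincides with the paper's. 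Your side remark about the differentiability of $t\mapsto\rho^{t\mu}$ is a legitimate point, but the paper takes it for granted as part of the smooth dependence in the Narasimhan--Seshadri correspondence, so it is not a point of divergence.
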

\begin{proof} Replace in the equation \eqref{dbar} $\mu$ by $t\mu$ and differentiate it  with respect to $t$ at $t=0$. Using that $\left.f^{t\mu}(z)\right|_{t=0}=\left.f_{t\mu}(z)\right|_{t=0}=I$ we obtain
$$\frac{\del}{\del\bar{z}}\dot{f}^{\mu}(z)=\frac{d}{d\bar{z}}\dot{f}_{\mu}(z)=\mu(z),$$
and since $\dot{f}_{\mu}(z_{0})=0$ we have $\dot{f}_{\mu}=\cE$ and
$$\dot{f}^{\mu}=\cE+F,$$
where the function $F(z)$ is holomorphic. Since $\rho^{t\mu}(\gamma)\in \mathrm{U}(n)$, we have $\dot{\rho}^{\mu}(\gamma)\rho^{-1}(\gamma)\in\frak{u}(n)$, where
$$\dot{\rho}^{\mu}(\gamma)=\left.\frac{d}{dt}\right|_{t=0}\rho^{t\mu}(\g).$$
It follows from the transformation law in part (i) of Proposition \ref{B-coordinates} that
\begin{equation}\label{transform-law}
\dot{f}^{\mu}(\gamma z)=\mathrm{Ad}\,\rho(\gamma)\dot{f}^{\mu}(z)+\dot{\rho}^{\mu}(\gamma)\rho^{-1}(\gamma),
\end{equation}
so as in the proof of Lemma \ref{iso-period}, harmonic $\mathrm{Ad}\,\rho$-invariant function $(\cE+F)+(\cE+F)^{*}$ is $cI$. Since $\det f^{t\mu}(z_{0})=1$, we have $\tr\dot{f}^{\mu}(z_{0})=0$, so $c=0$ and   
$F+\cE^{\ast}=0$, which proves the first formula.  Since $\dot{g}^{\mu}=\dot{f}^{\mu}-\dot{f}_{\mu}$, we get the second formula.
\end{proof}

\begin{corollary}\label{cor-0} If $\dot{\rho}^{\mu}\rho^{-1}\!\in B^{1}(\Gamma,\frak{g}_{\mathrm{Ad}\,\rho})$, i.e., $\dot{\rho}^{\mu}(\gamma)\rho^{-1}(\gamma)=\mathrm{Ad}\,\rho(\g)v-v$ for some $v\in\frak{g}$ and all $\g\in\Gamma$, then $\mu=0$.
\end{corollary}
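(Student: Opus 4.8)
The plan is to derive Corollary \ref{cor-0} directly from Lemma \ref{a-lemma} together with the isomorphism in Lemma \ref{iso-period}. First I would observe that by Lemma \ref{a-lemma} we have $\dot f^{\mu} = \cE - \cE^{\ast}$, and by the transformation law \eqref{transform-law} the quantity $\dot\rho^{\mu}(\g)\rho^{-1}(\g)$ is precisely the automorphy factor of $\dot f^{\mu}$ with respect to the $\mathrm{Ad}\,\rho$-action. Since $\cE$ has automorphy factor $\vp(\g)$ (the Eichler--Shimura cocycle of $\mu$) and $\cE^{\ast}$ has automorphy factor $\vp(\g)^{\ast}$, it follows that
$$\dot\rho^{\mu}(\g)\rho^{-1}(\g) = \vp(\g) - \vp(\g)^{\ast}.$$

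Next I would use the hypothesis that $\dot\rho^{\mu}\rho^{-1}$ is a coboundary: $\vp(\g)-\vp(\g)^{\ast} = \mathrm{Ad}\,\rho(\g)v - v$ for some $v\in\frak g$ and all $\g\in\Gamma$. The point is then to conclude that $\vp$ itself is a coboundary, after which Lemma \ref{iso-period} (the injectivity of the period map on $\cH^{0,1}$, or equivalently its first assertion applied to $[\vp]=0$) forces $\mu=0$. To get from the vanishing of $[\vp-\vp^{\ast}]$ to the vanishing of $[\vp]$, I would split into Hermitian and anti-Hermitian parts: writing $\vp = \vp_{+} + \vp_{-}$ with $\vp_{+} = \tfrac12(\vp+\vp^{\ast})$ Hermitian-valued and $\vp_{-} = \tfrac12(\vp-\vp^{\ast})$ anti-Hermitian-valued — note $\vp_{-}$ takes values in $\frak u(n)$ — the hypothesis says $2\vp_{-}$ is a coboundary, i.e. $[\vp_{-}]=0$ in $H^{1}(\Gamma,\frak g_{\mathrm{Ad}\,\rho})$. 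Replacing $v$ by its anti-Hermitian part (using that $\rho$ is unitary, so $\mathrm{Ad}\,\rho$ preserves $\frak u(n)$ and its Hermitian complement) we may assume $v\in\frak u(n)$; then $\vp - (\mathrm{Ad}\,\rho(\g)v-v)$ still equals $\vp$ up to a coboundary and its anti-Hermitian part now also vanishes as a cocycle, so without loss of generality $\vp(\g)\in\frak u(n)$ for all $\g$. At that point the second assertion of Lemma \ref{iso-period} applies verbatim: $\vp(\g)\in\frak u(n)$ implies $\cE=0$, hence $\mu=0$.

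Alternatively — and this is perhaps cleaner to write — I would feed the relation $\dot\rho^{\mu}\rho^{-1} = \vp-\vp^{\ast}$ being a coboundary back into the construction of Lemma \ref{a-lemma}: the Eichler integral $\cE-\cE^{\ast}$ has automorphy factor $\vp-\vp^{\ast}$, so if this equals $\mathrm{Ad}\,\rho(\g)v-v$ then $\cE-\cE^{\ast} - (\mathrm{Ad}\,\rho(\g)v - v) = \cE - \cE^{\ast} - \mathrm{Ad}\,\rho(\cdot)v + v$ is an $\mathrm{Ad}\,\rho$-invariant matrix-valued function on $\HH$. After replacing $v$ by $\tfrac12(v-v^{\ast})\in\frak u(n)$ as above, this invariant function is anti-Hermitian-valued, hence (by the unitarity of $\rho$ and \cite[Prop.~4.2]{NS1}, exactly as in the proof of Lemma \ref{iso-period}) constant; since $\cE(z_{0}) = \cE^{\ast}(z_{0}) = 0$ the constant is $-v+v = 0$, so $\cE = \cE^{\ast}$, forcing $\cE$ to be simultaneously anti-holomorphic and holomorphic, hence constant, hence $\mu=0$.

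The main obstacle is the bookkeeping with the splitting into Hermitian and anti-Hermitian parts and checking that it is compatible with the $\mathrm{Ad}\,\rho$-action — this is exactly where the unitarity of $\rho$ enters, and it must be used carefully so that reducing $v$ to its anti-Hermitian part does not spoil the cocycle identity. Everything else is a direct invocation of Lemma \ref{a-lemma} and Lemma \ref{iso-period}; no new analysis is needed. I would present the second (direct) argument in the paper, since it reuses verbatim the final paragraph of the proof of Lemma \ref{iso-period} and keeps the exposition self-contained.
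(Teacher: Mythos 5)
Your second (preferred) argument is, up to slips of notation, exactly the paper's proof: from \eqref{transform-law} and the coboundary hypothesis, $\dot{f}^{\mu}+v=\cE-\cE^{*}+v$ is an $\mathrm{Ad}\,\rho$-invariant harmonic matrix-valued function on $\HH$, hence constant by unitarity and irreducibility of $\rho$, whence $\mu=\frac{\del}{\del\bar{z}}(\cE-\cE^{*}+v)=0$. Two corrections are needed before it reads cleanly. First, the invariant object is $\cE-\cE^{*}+v$, a function of $z$ alone, not ``$\cE-\cE^{*}-\mathrm{Ad}\,\rho(\cdot)v+v$''; its constant value is $v$ (not $-v+v=0$), which still gives $\cE\equiv\cE^{*}$ and hence $\mu=0$. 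Moreover the reduction of $v$ to its anti-Hermitian part is harmless but entirely unnecessary: constancy follows from harmonicity and invariance alone, and nothing in the argument requires the invariant function to be $\frak{u}(n)$-valued. Second, your first argument has the Hermitian and anti-Hermitian roles reversed: the hypothesis gives $[\vp_{-}]=0$, so modulo a coboundary $\vp$ equals its \emph{Hermitian} part $\vp_{+}$, not an element of $\frak{u}(n)$; consequently the second assertion of Lemma \ref{iso-period} does not apply verbatim, and one needs instead its Hermitian analogue ($\vp=\vp^{*}$ implies $\cE=0$), proved identically with $\cE-\cE^{*}$ in place of $\cE+\cE^{*}$ and with $\cE$ shifted by the element implementing the coboundary. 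There is also a factor of $2$ astray there: since $\vp-\vp^{*}=2\vp_{-}=\mathrm{Ad}\,\rho(\g)v-v$, one must subtract the coboundary of $\tfrac12 v_{-}$, not of $v$, to make the anti-Hermitian part of the cocycle vanish. Neither issue is conceptual, but as written only the second argument, once cleaned up, is a complete proof --- and it coincides with the one in the paper.
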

\begin{proof} It follows from \eqref{transform-law} that in this case $\dot{f}^{\mu}+v$ is $\mathrm{Ad}\,\rho$-invariant harmonic matrix-valued function on $\HH$, and since the representation $\rho$ is unitary, it is a constant function.
\end{proof}
Introducing for small $\vep\in\CC$ notation
$$\dot{f}_{+}^{\mu}=\left.\frac{\del}{\del\vep}\right|_{\vep=0}f^{\vep\mu}\quad\text{and}\quad \dot{f}_{-}^{\mu}=\left.\frac{\del}{\del\bar\vep}\right|_{\vep=0}f^{\vep\mu},$$
we get
$$\dot{f}_{+}^{\mu}=\frac{1}{2}(\dot{f}^{\mu}-i\dot{f}^{i\mu})\quad\text{and}\quad \dot{f}_{-}^{\mu}=\frac{1}{2}(\dot{f}^{\mu}+i\dot{f}^{i\mu})$$ 
(here $i=\sqrt{-1}$), and from Lemma \ref{a-lemma} we obtain
\begin{corollary} \label{cor-1}
\begin{align*}
\dot{f}_{+}^{\mu}=\cE,\quad \dot{f}_{-}^{\mu}=-\cE^{*}.
\end{align*}
\end{corollary}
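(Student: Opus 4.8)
The plan is to deduce Corollary~\ref{cor-1} directly from Lemma~\ref{a-lemma} by applying that lemma to both $\mu$ and $i\mu$ and substituting the results into the two Wirtinger-type identities displayed just before the statement. First I would record two elementary facts that make this legitimate. The space $\cH^{0,1}(X,\mathrm{End}\,E)$ is a complex vector space and the transformation law \eqref{mu-transform} is $\CC$-linear in $\mu$, since $\mathrm{Ad}\,\rho(\gamma)$ is $\CC$-linear; hence $i\mu\in\cH^{0,1}(X,\mathrm{End}\,E)$ and Lemma~\ref{a-lemma} does apply to $i\mu$. Moreover the Eichler integral $\cE(z)=\int_{z_{0}}^{z}\mu(\zeta)\,d\bar\zeta$ is $\CC$-linear in $\mu$, so the Eichler integral of $i\mu$ equals $i\cE$.

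Next I would apply Lemma~\ref{a-lemma} twice. For $\mu$ it gives $\dot f^{\mu}=\cE-\cE^{\ast}$, and for $i\mu$ it gives $\dot f^{i\mu}=(i\cE)-(i\cE)^{\ast}$. The one computation that needs care is the behaviour of Hermitian conjugation, which is \emph{conjugate}-linear, so $(i\cE)^{\ast}=\overline{i}\,\cE^{\ast}=-i\,\cE^{\ast}$; hence $\dot f^{i\mu}=i\cE+i\,\cE^{\ast}=i(\cE+\cE^{\ast})$.

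Finally I would substitute into
$$\dot f_{+}^{\mu}=\frac{1}{2}\bigl(\dot f^{\mu}-i\,\dot f^{i\mu}\bigr),\qquad \dot f_{-}^{\mu}=\frac{1}{2}\bigl(\dot f^{\mu}+i\,\dot f^{i\mu}\bigr).$$
In the first identity $-i\,\dot f^{i\mu}=-i\cdot i(\cE+\cE^{\ast})=\cE+\cE^{\ast}$, so $\dot f_{+}^{\mu}=\frac{1}{2}\bigl((\cE-\cE^{\ast})+(\cE+\cE^{\ast})\bigr)=\cE$; in the second $i\,\dot f^{i\mu}=-(\cE+\cE^{\ast})$, so $\dot f_{-}^{\mu}=\frac{1}{2}\bigl((\cE-\cE^{\ast})-(\cE+\cE^{\ast})\bigr)=-\cE^{\ast}$. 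That is the entire argument; there is no genuine obstacle, and the only place a slip could occur is the sign produced by the conjugate-linearity of $\cdot^{\ast}$ in the evaluation of $\dot f^{i\mu}$ --- precisely the sign that produces the cancellations above. If it seems worthwhile I would also add a line recalling why the two Wirtinger identities hold: writing $\vep=s+\sqrt{-1}\,t$, the restriction $\vep=s$ recovers $f^{s\mu}$, whose $s$-derivative at $0$ is $\dot f^{\mu}$, and the restriction $\vep=\sqrt{-1}\,t$ recovers $f^{t(i\mu)}$, whose $t$-derivative at $0$ is $\dot f^{i\mu}$; but the text presents these as already established.
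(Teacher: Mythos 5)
Your proposal is correct and is precisely the argument the paper intends: the text derives Corollary \ref{cor-1} by combining the Wirtinger identities $\dot f_{\pm}^{\mu}=\tfrac{1}{2}(\dot f^{\mu}\mp i\dot f^{i\mu})$ with Lemma \ref{a-lemma} applied to both $\mu$ and $i\mu$, exactly as you do. You have merely made explicit the computation (including the key conjugate-linearity sign $(i\cE)^{\ast}=-i\cE^{\ast}$) that the paper leaves to the reader.
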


%%%%%%%%%%%%%%%%%%%%%%%
%%%%%%%%%%%%%%%%%%%%%%%
\section{Goldman symplectic form} \label{goldman}

%%%%%%%%%%%%%%%%%%%%%
\subsection{Character variety} \label{ch-variety} Let $G$ be 
a complex (or real) 
Lie group that preserves a non-degenerate symmetric bilinear form $B$ on its Lie algebra $\frak{g}$, and let $\cK_{G}$ be the corresponding $G$-character variety, a complex (or real) manifold of complex (or real) dimension $(2g-2)\dim\frak{g}+2$. Here  we only consider the group $G=\mathrm{GL}(n,\CC)$ and its compact real form $G_{\RR}=\mathrm{U}(n)$, so
$$\cK_{G}=\mathrm{Hom}_{0}(\pi_{1},G)/G,$$
the subscript $0$ stands for irreducible representations, and use $B(u,v)=\tr uv$ on $\frak{g}\simeq\mathrm{End}\,\CC^{n}$. The character variety $\cK=\cK_{G_{\RR}}$ was defined in \S \ref{m space}.

It is well-known \cite{Goldman1} that the holomorphic tangent space $T_{[\sigma]}\cK_{G}$ at $[\sigma]$ is naturally identified with the cohomology group 
$$H^{1}(\Gamma,\frak{g}_{\mathrm{Ad}\,\si})=Z^{1}(\Gamma,\frak{g}_{\mathrm{Ad}\,\si})/B^{1}(\Gamma,\frak{g}_{\mathrm{Ad}\,\si}),$$
where we use $\pi_{1}\simeq\Gamma$. Here $\frak{g}$ is understood as a left $\Ga$-module with respect to the action $\mathrm{Ad}\,\si$, and a $1$-cocycle $\chi\in Z^{1}(\Gamma,\frak{g}_{\mathrm{Ad}\,\si})$ is a map $\chi: \Gamma\rightarrow \frak{g}$ satisfying
\begin{equation} \label{par-1}
\chi(\gamma_{1}\gamma_{2})=\chi(\gamma_{1})+\si(\gamma_{1})\cdot\chi(\gamma_{2}), \quad\gamma_{1},\gamma_{2}\in \Gamma.
\end{equation}
Here and in what follows we denote by the dot the adjoint action of $G$ on $\frak{g}$.

Denote by $[\chi]$ the cohomology class of a $1$-cocycle $\chi$.  
The Goldman symplectic form $\omega_{G}$ is a holomorphic $(2,0)$-form (or real $(1,1)$-form) on the character variety $\cK_{G}$, defined by
\begin{equation}\label{G}
\omega_{G}([\chi_{1}],[\chi_{2}])=\la [\chi_{1}]\cup[\chi_{2}]\ra([X]),\quad\text{where}\quad[\chi_{1}],[\chi_{2}]\in T_{[\si]}\cK_{G}.
\end{equation}
Here $[X]$ is the fundamental class of $X$ under the isomorphism $H_{2}(X,\ZZ)\simeq H_{2}(\Gamma,\ZZ)$, and $\la [\chi_{1}]\cup[\chi_{2}]\ra\in H^{2}(\Gamma,\RR)$ is a composition of the cup product in the cohomology and of the invariant bilinear form. At the cocycle level it is given explicitly by
\begin{align*}
\la \chi_{1}\cup\chi_{2}\ra(\gamma_{1},\gamma_{2})&=B(\chi_{1}(\gamma_{1}),\si(\gamma_{1})\cdot\chi(\gamma_{2}))\\
&=-B(\chi_{1}(\gamma^{-1}_{1}),\chi(\gamma_{2})),\quad\gamma_{1},\gamma_{2}\in\Gamma.
\end{align*}
The right-hand side in \eqref{G} does not depend on a choice of representatives $\chi_{1}, \chi_{2} \in Z^{1}(\Gamma,\frak{g}_{\mathrm{Ad}\,\si})$ of the cohomology classes $[\chi_{1}], [\chi_{2}]\in H^{1}(\Gamma,\frak{g}_{\mathrm{Ad}\,\si}) $, and we will use the notation $\omega_{G}(\chi_{1},\chi_{2})$.

Let $a_{k},b_{k}$, $k=1,\dots,g$, be standard generators of the group $\Gamma$, satisfying the single relation
$$R_{g}=\prod_{k=1}^{g}[a_{k},b_{k}]=1,\quad\text{where}\quad [a,b]=aba^{-1}b^{-1}.$$
The fundamental class $[X]$ can be realized as the following $2$-cycle in the group homology (see \cite{Goldman1, Tak} and references therein) 
\begin{equation} \label{2-cycle}
c=\sum_{k=1}^{g}\left\{\left(\frac{\del R}{\del a_{k}}, a_{k}\right) +\left(\frac{\del R}{\del b_{k}}, b_{k}\right)\right\}\in H_{2}(\Gamma,\ZZ),
\end{equation}
where $R=R_{g}$ and
\begin{equation*}
R_{k} =  \prod_{i=1}^{k}[a_{i},b_{i}],\quad k=1,\dots, g.
\end{equation*}
Here
\begin{equation*} 
\frac{\del R}{\del a_{k}}= R_{k-1}-R_{k}b_{k}\quad\text{and}\quad \frac{\del R}{\del b_{k}}= R_{k-1}a_{k}-R_{k},
\end{equation*}
where derivatives are understood in the sense of Fox free differential calculus and the relation $R=1$ is set after the differentiation.
In these notations \eqref{G} takes the following form
$$\omega_{\mathrm{G}}(\chi_{1},\chi_{2})=-\sum_{k=1}^{g}B\left(\chi_{1}\!\!\left(\# \frac{\del R}{\del a_{k}}\right),\chi_{2}(a_{k})\right) + B\left(\chi_{1}\!\!\left(\# \frac{\del R}{\del b_{k}}\right),\chi_{2}(b_{k})\right).$$
Here $\chi$ is extended to a linear map defined on the integral group ring $\ZZ[\Gamma]$, and $\#$ denotes the natural anti-involution on $\ZZ[\Gamma]$, 
$$\#\left(\sum n_{j}\gamma_{j}\right)=\sum n_{j}\gamma_{j}^{-1}.$$

It is convenient to use the dual generators of $\Gamma$,
\begin{equation}\label{dual-gen}
\alpha_{k}=R_{k-1}b^{-1}_{k}R_{k}^{-1},\;\beta_{k}=R_{k}a_{k}^{-1}R_{k-1}^{-1},\quad k=1,\dots,g.
\end{equation} 
They satisfy
$$[\alpha_{k},\beta_{k}]=R_{k-1}R_{k}^{-1},$$
so that
\begin{equation}\label{R-dual}
\mathcal{R}_{k}=\prod_{i=1}^{k}[\alpha_{i},\beta_{i}]=R_{k}^{-1},\quad \mathcal{R}_{g}=1
\end{equation}
and
\begin{equation} \label{gen-dual}
a_{k}^{-1}=\R_{k}\beta_{k}\R_{k-1}^{-1},\quad b_{k}^{-1}=\R_{k-1}\alpha_{k}\R^{-1}_{k}.
\end{equation}
We have\footnote{Correcting two obvious typos, missing negative signs, in formulas in \cite[Remark 3]{Tak}.}
$$\# \frac{\del R}{\del a_{k}}=R_{k-1}^{-1}-R_{k-1}^{-1}\alpha_{k}\quad\text{and}\quad \# \frac{\del R}{\del b_{k}}= R_{k}^{-1}\beta_{k}-R_{k}^{-1},$$
so the Goldman form can be conveniently written as
\begin{gather}
\omega_{\mathrm{G}}(\chi_{1},\chi_{2})=\nonumber\\
\sum_{k=1}^{g}B(\chi_{1}(\alpha_{k}),\si(R_{k-1})\!\cdot\!\chi_{2}(a_{k})) - B(\chi_{1}(\beta_{k}),\si(R_{k})\!\cdot\!\chi_{2}(b_{k})).\label{dual}
\end{gather}

%%%%%%%%%%%%%%%%%%%%%%%%%
\subsection{Goldman theorem}  \label{3-2}
Fuchsian uniformization of genus $g>1$ 
Riemann surfaces determines a real-analytic map of the Teichm\"{u}ller space $T_{g}$ to the $\mathrm{PSL}(2,\RR)$-character variety. 
Proposition 2.5 in \cite{Goldman1} asserts that the the pullback of Goldman symplectic form by this map is the Weil-Petersson symplectic form on $T_{g}$. As W. Goldman notes in \cite{Goldman1}, it is a reformulation, in a more invariant form using modern notation, of classic results of M. Eichler and G. Shimura on the periods of automorphic forms (see \S\,8.2 in \cite{Shimura} and references therein).

Likewise, the Narasimhan-Seshadri  theorem determines a real-analytic map $\imath$ of the moduli space $\cN$ to the $\mathrm{U}(n)$-character variety $\cK$ by assigning to each stable bundle
corresponding unitary representation:  
$$\cN\ni \{E\}\mapsto\imath(E)=[\rho]\in\cK.$$
Using the $\RR$-linear isomorphism between real and holomorphic tangent spaces,  one can think of the differential of $\imath$ as the map

$$T_{E}\cN\ni \mu\mapsto \imath_{\ast}(\mu)=\chi\in T_{\rho}\cK,$$
and according to Lemma \ref{a-lemma},
$$\chi(\gamma)=\vp(\gamma)-\vp(\gamma)^{*}\in\frak{u}(n),$$
where $\vp(\gamma)$ are the Eichler-Shimura periods of a harmonic $(0,1)$-form $\mu$ (see \S\ref{Eichler}).
 
Theorem \ref{E-S-G} below is a vector bundle analogue of Proposition 2.5 in \cite{Goldman1} and is attributed to Goldman.
Likewise, it is basically a reformulation,  using a more invariant form and modern notation, of the Riemann bilinear relations for the bundle-valued holomorphic differentials
on a Riemann surface, as in Theorem 21 in R. Gunning's lectures \cite{Gunning}. 

We summarize these relations in Theorem \ref{th: E-S} below. Its proof is based on a simple computation, which we present here for convenience of the reader. It uses a detailed structure of a fundamental domain $F$ associated with the standard generators $a_{k},b_{k}$ of $\Gamma$ (see \cite{Hejhal} and references therein). Succinctly, the fundamental domain $F$ we use is an oriented topological $4g$-gon
with the base point $z_{0}\in \HH$, whose ordered vertices are consecutive quadruples
$$(R_{k}z_{0},R_{k}a_{k+1}z_{0}, R_{k}a_{k+1}b_{k+1}z_{0}, R_{k}a_{k+1}b_{k+1}a^{-1}_{k+1}z_{0}),\quad k=0,\dots, g-1.$$
Corresponding $A$ and $B$ edges of $F$ are analytic arcs 
$$A_{k}=(R_{k-1}z_{0}, R_{k-1}a_{k}z_{0})\quad \text{and}\quad B_{k}=(R_{k}z_{0},R_{k}b_{k}z_{0}),$$
and corresponding dual edges are 
$$A_{k}'= (R_{k}b_{k}z_{0},R_{k}b_{k}a_{k}z_{0})\quad \text{and}\quad B_{k}' =(R_{k-1}a_{k}z_{0},R_{k}b_{k}a_{k}z_{0}),$$ 
$k=1,\dots,g$. (see Fig. 1 for a typical fundamental domain for a
group $\Gamma$ with $g=2$). 
We have
\begin{equation} \label{f-domain}
\del F=\sum_{i=1}^{2g}(S_{i}-\lambda_{i}(S_{i})),
\end{equation}
where 
\begin{equation}\label{S-lambda}
S_{k}=A_{k},\;\;S_{k+g}=-B_{k}\quad \text{and}\quad \lambda_{k}=\alpha^{-1}_{k},\;\; \lambda_{k+g}=\beta^{-1}_{k},
\end{equation}
$k=1,\dots,g$.
\vspace{5mm}

\begin{tikzpicture}
\tikzstyle{every node}=[font=\small]
\draw (-6,0) -- (6,0) coordinate[];
\fill[gray!10]   (2,2) to (3,4) to (2,5) to (0,5) to (-2,4) to (-3,3) to (-2,2) to (0,1);

\begin{scope}[thick, decoration={markings, mark=at position 0.5 with {\arrow{>}}}]         
\draw[fill=white!300, line width=0.8pt, postaction ={decorate}]
(2,2) to [bend left] (3,4);
\draw[fill=white!300,,line width=0.8pt, postaction ={decorate}] 
(3,4) to [bend left] (2,5);
\draw[fill=white!300,,line width=0.8pt, postaction ={decorate}] 
(0,5) to [bend right] (2,5);
\draw[fill=white!300,,line width=0.8pt, postaction ={decorate}] 
(-2,4) to [bend right] (0,5);
\draw[fill=white!300,,line width=0.8pt,postaction ={decorate}] 
(-2,4) to [bend left] (-3,3);
\draw[fill=white!300, line width=0.8pt,postaction ={decorate}] 
(2,2) to [bend right] (0,1);
\draw[fill=white!300, line width=0.8pt,postaction ={decorate}] 
(0,1) to [bend right] (-2,2);
\draw[fill=white!300,line width=0.8pt, postaction ={decorate}] 
(-3,3) to [bend left] (-2,2);
  \end{scope}
\node[right] at (2,1.95) {$z_{0}$};
\node[below] at (1,1.6) {$B_{2}$};
\node[right] at (3,4) {$a_{1}z_{0}$};
\node[below] at (2.6,3.4) {$A_{1}$};
\node at (0,3) {$F$};
\node[above] at (2.2,5) {$a_{1}b_{1}z_{0}$};
\node[above] at (2.5,4.35) {$B'_{1}$};
\node[above] at (-.2,5) {$a_{1}b_{1}a_{1}^{-1}z_{0}$};
\node[above] at (1,4.7) {$A'_{1}$};
\node[above] at (-2,4) {$R_{1}z_{0}$};
\node[above] at (-1,4.2) {$B_{1}$};
\node[left] at (-3,3) {$R_{1}a_{2}z_{0}$};
\node[left] at (-2.25, 3.6) {$A_{2}$};
\node[below] at (-2,2) {$R_{1}a_{2}b_{2}z_{0}$};
\node[below] at (-2.6,2.7) {$B'_{2}$};
\node[below] at (-1,1.6) {$A'_{2}$};
\node[below] at (0,1) {$R_{1}a_{2}b_{2}a_{2}^{-1}z_{0}$};
\node at (0,-1) {Figure 1};
\filldraw [black] (-3,3) circle [radius=1pt];
\filldraw [black] (0,1) circle [radius=1pt];
\filldraw [black] (2,2) circle [radius=1pt];
\filldraw [black] (3,4) circle [radius=1pt];
\filldraw [black] (2,5) circle [radius=1pt];
\filldraw [black] (0,5) circle [radius=1pt];
\filldraw [black] (-2,4) circle [radius=1pt];
\filldraw [black] (-2,2) circle [radius=1pt];
\end{tikzpicture}

\noindent

\begin{theorem} \label{th: E-S} Let $\mu_{1}, \mu_{2}\in\cH^{0,1}(X,\mathrm{End}\, E_{\rho})$, where $\rho:\Gamma\to\mathrm{U}(n)$, and let $\vp_{1}$ and $\vp_{2}$ be their corresponding Eichler-Shimura periods. We have the following analog of Riemann bilinear relations:
$$\omega_{G}(\vp_{1},\vp_{2}^{*}) = \sqrt{-1}\,\la \mu_{1},\mu_{2}\ra 
\quad\text{and}\quad \omega_{G}(\vp_{1},\vp_{2}) =0.$$
\end{theorem}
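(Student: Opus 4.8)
The plan is to convert the Goldman cocycle pairing \eqref{dual} into a contour integral over $\del F$ via the Eichler integral, and then use Stokes' theorem to reduce it to the Hodge pairing $\la\mu_1,\mu_2\ra$. First I would introduce the Eichler integrals $\cE_1,\cE_2$ of $\mu_1,\mu_2$ and the Eichler integral $\Theta_2$ of $\mu_2^*$, so that $d\cE_1 = \mu_1\,d\bar z$ and $d\Theta_2 = \mu_2^*\,dz$ as $\mathrm{End}\,\CC^n$-valued $1$-forms on $\HH$. The natural $2$-form to integrate is $\tr(\Theta_2\wedge d\cE_1) = \tr(\Theta_2\,\mu_1)\,dz\wedge d\bar z$, whose integral over the fundamental domain $F$ (equivalently over $X$) is, up to the constant $\sqrt{-1}$, the quantity $\la\mu_1,\mu_2\ra$ written in the form $\sqrt{-1}\iint_F\tr(\mu_1\mu_2^*)\,dz\wedge d\bar z$ after an integration by parts and using that $\mu_1,\mu_2$ are harmonic (antiholomorphic). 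Since $\tr(\Theta_2\wedge d\cE_1) = d\,\tr(\Theta_2\wedge\cE_1) \pm \tr(d\Theta_2\wedge\cE_1)$, Stokes' theorem turns the area integral into a boundary integral $\oint_{\del F}\tr(\Theta_2\,\cE_1)\,(\cdots)$ plus a similar term; the point is that the boundary $\del F$ pairs opposite edges $S_i$ and $\lambda_i(S_i)$, and the transformation laws $\cE_j(\gamma z) = \mathrm{Ad}\,\rho(\gamma)\cE_j(z) + \vp_j(\gamma)$, $\Theta_2(\gamma z) = \mathrm{Ad}\,\rho(\gamma)\Theta_2(z) + \vp_2(\gamma)^*$ let one collect the edge contributions into precisely the cocycle expression \eqref{dual} for $\omega_G(\vp_1,\vp_2^*)$, using $\Ad$-invariance of $\tr$.

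\textbf{Key steps in order.} (1) Set up $\cE_1$, $\Theta_2$ and the $2$-form $\eta = \tr(\Theta_2\wedge \mu_1\, d\bar z)$, and record that $\iint_F \eta$ equals $\sqrt{-1}\la\mu_1,\mu_2\ra$ after using antiholomorphicity of $\mu_1$ to integrate by parts (replacing $\Theta_2$ by $\mu_2^*$ costs a boundary term that either vanishes or is absorbed — this needs care). Actually it is cleaner to start directly from $\tr(\mu_1\wedge\mu_2^*)$: write $\mu_2^*\,dz = d\Theta_2$, so $\tr(\mu_1\,d\bar z\wedge\mu_2^*\,dz) = -\tr(\mu_1\wedge d\Theta_2)\,d\bar z = -d\bigl(\tr(\mu_1\,d\bar z\cdot\Theta_2)\bigr)$ since $d(\mu_1\,d\bar z)=0$; but $\mu_1\,d\bar z = d\cE_1$, giving $= -d\,\tr(d\cE_1\cdot\Theta_2) = -d\,\tr(\cE_1\,d\Theta_2) = -d\,\tr(\cE_1\,\mu_2^*\,dz)$ — two ways of pushing the exact form to the boundary. (2) Apply Stokes and \eqref{f-domain}: $\oint_{\del F} = \sum_i\bigl(\int_{S_i} - \int_{\lambda_i(S_i)}\bigr)$. (3) On each paired edge change variables $z\mapsto\lambda_i(z)$ in the second integral and substitute the transformation laws; the $\mathrm{Ad}\,\rho$-conjugated main terms cancel between the two edge integrals (using cyclicity of trace), leaving only products of the periods $\vp_1,\vp_2^*$ evaluated along edges. (4) Match the resulting sum against \eqref{dual} using the bookkeeping \eqref{S-lambda}, the dual generators \eqref{dual-gen}, and the cocycle relation \eqref{E-S}; for the vanishing statement $\omega_G(\vp_1,\vp_2)=0$, repeat with $\Theta_2$ replaced by the Eichler integral of the $(0,1)$-form $\mu_2$ itself — then the relevant $2$-form is $\tr(\mu_1\,d\bar z\wedge\mu_2\,d\bar z)=0$ identically, so the boundary integral vanishes, and unwinding gives $\omega_G(\vp_1,\vp_2)=0$.

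\textbf{Main obstacle.} The hard part will be the combinatorial bookkeeping in step (4): correctly tracking which group elements ($R_k$, $\alpha_k$, $\beta_k$, $a_k$, $b_k$) appear on which edge of the $4g$-gon, getting all the signs from \eqref{S-lambda} and the orientation of $\del F$ right, and seeing that the conjugation factors $\mathrm{Ad}\,\rho(R_{k-1})$, $\mathrm{Ad}\,\rho(R_k)$ land exactly as in \eqref{dual}. This is precisely the classical Riemann–bilinear–relations computation adapted to the non-abelian, matrix-valued setting, so the cancellation of the cross terms relies essentially on $\tr(AB)=\tr(BA)$ and on $\rho$ being a homomorphism; no analytic subtlety beyond Stokes' theorem is involved, but the edge-by-edge accounting must be done carefully. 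A secondary point to watch is the boundary term generated when integrating by parts to pass between $\iint_F\tr(\mu_1\mu_2^*)$ and $\iint_F\eta$ — it is best avoided by working with the exact form from the outset as indicated above, so that the only place transformation laws enter is the single application of Stokes' theorem.
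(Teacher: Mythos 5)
Your proposal is correct and follows essentially the same route as the paper: both pass from $\iint_{F}\tr(\mu_{1}\mu_{2}^{*})\,dz\wedge d\bar z$ to the exact form $-d\{\tr(\cE_{1}\mu_{2}^{*})dz\}$, apply Stokes with the edge pairing \eqref{f-domain} and the transformation law of the Eichler integral, and match the resulting period sums against \eqref{dual}, with the vanishing $\omega_{G}(\vp_{1},\vp_{2})=0$ following from $\int_{X}\mu_{1}\wedge\mu_{2}=0$. The auxiliary integral $\Theta_{2}$ you introduce is not needed in the paper's version, but your ``cleaner'' variant starting directly from the exact form is exactly what the paper does.
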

\begin{proof} By Stokes theorem,
\begin{gather*}
\la\mu_{1},\mu_{2}\ra =\sqrt{-1}\iint_{F}\tr\left(\mu_{1}(z)\mu_{2}(z)^{*}\right) dz\wedge d\bar{z}
 =-\sqrt{-1}\iint_{F}d\left\{\tr(\cE_{1}(z)\mu_{2}(z)^{*})dz\right\}\\
=-\sqrt{-1}\int_{\del F}\tr\left(\cE_{1}(z)\mu_{2}(z)^{*}\right)dz.
\end{gather*}
Using \eqref{f-domain}, we get
\begin{gather*}
\int_{\del F}\tr(\cE_{1}(z)\mu_{2}(z)^{*})dz=\sum_{i=1}^{2g}\left\{\int_{S_{i}}\tr\left(\cE_{1}(z)\mu_{2}(z)^{*}\right)dz -\int_{\lambda_{i}(S_{i})}\tr\left(\cE_{1}(z)\mu_{2}(z)^{*}\right)dz\right\}\\
 =-\sum_{i=1}^{2g}\int_{S_{i}}\tr(\vp_{1}(\lambda_{i}) \rho(\lambda_{i})\cdot\mu_{2}(z)^{*})dz
  =\sum_{i=1}^{2g}\tr\left(\vp_{1}(\lambda^{-1}_{i})\int_{S_{i}}\mu_{2}(z)^{*}dz\right).
\end{gather*}
Using \eqref{S-lambda}, we obtain
$$\int_{S_{k}}\mu_{2}(z)^{*}dz=\int_{A_{k}}\mu_{2}(z)^{*}dz=\int_{R_{k-1}z_{0}}^{R_{k-1}a_{k}z_{0}}\mu_{2}(z)^{*}dz=\rho(R_{k-1})\cdot\vp_{2}(a_{k})^{*}$$
and
$$\int_{S_{k+g}}\mu_{2}(z)^{*}dz=-\int_{B_{k}}\mu_{2}(z)^{*}dz=-\int_{R_{k}z_{0}}^{R_{k}b_{k}z_{0}}\mu_{2}(z)^{*}dz=-\rho(R_{k})\cdot\vp_{2}(b_{k})^{*}.$$
Therefore,
\begin{gather*}
\la\mu_{1},\mu_{2}\ra= -\sqrt{-1}\sum_{k=1}^{g}\left\{\tr(\vp_{1}(\alpha_{k})\rho(R_{k-1})\cdot\vp_{2}(a_{k})^{*})-\tr(\vp_{1}(\beta_{k})\rho(R_{k})\cdot\vp_{2}(b_{k})^{*})\right\}\\
=-\sqrt{-1}\,\omega_{G}(\vp_{1},\vp_{2}^{*}).
\end{gather*}
This proves the first bilinear relation. The second one follows immediately, since 
$$\int_{X}\mu_{1}\wedge \mu_{2}=0.\qedhere$$
\end{proof}
\begin{remark}\label{one-zero} Equivalently, Theorem \ref{th: E-S} can be stated in terms of the harmonic $(1,0)$-forms. Namely, let $\theta_{1}, \theta_{2}\in\cH^{1,0}(X,\mathrm{End}\, E_{\rho})$, where $\rho:\Gamma\to\mathrm{U}(n)$, and let $\psi_{1}$ and $\psi_{2}$ be their corresponding Eichler-Shimura periods. Then
$$\omega_{G}(\psi_{1},\psi_{2}^{*}) = \int_{X}\theta_{1}\wedge \theta_{2}^{*}\quad\text{and}\quad \omega_{G}(\psi_{1},\psi_{2}) =0.$$
According to Lemma \ref{iso-period}, $\cH^{1,0}(X,\mathrm{End}\, E_{\rho})$ and $\cH^{0,1}(X,\mathrm{End}\, E_{\rho})$ are complementary Lagrangian subspaces in the complex symplectic vector space
$H^{1}(\Gamma,\frak{g}_{\mathrm{Ad}\,\rho})$. 
\end{remark}
As immediate corollary of Theorem \ref{th: E-S}, we have the following vector bundle analog of the Eichler-Shimura-Goldman result --- Proposition 2.5 in \cite{Goldman1}.
\begin{theorem}\label{E-S-G} Under the map $\imath:\cN\to\cK$ of the moduli space $\cN$ to the $\mathrm{U}(n)$-character variety $\cK$,
$$\imath^{*}\omega_{G}=-4\, \omega_{NAB}.$$
\end{theorem}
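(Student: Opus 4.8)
The plan is to derive Theorem~\ref{E-S-G} as a direct consequence of Theorem~\ref{th: E-S} together with the explicit description of the differential $\imath_{*}$ recorded just before the statement. Recall that for $\mu\in T_{E}\cN\cong\cH^{0,1}(X,\mathrm{End}\,E_{\rho})$ we have $\imath_{*}(\mu)=\chi_{\mu}$, where the cocycle $\chi_{\mu}(\gamma)=\vp(\gamma)-\vp(\gamma)^{*}\in\frak{u}(n)$ and $\vp$ is the Eichler--Shimura period cocycle of $\mu$. Since $\omega_{NAB}$ is a $(1,1)$-form and $\imath$ is real-analytic, it suffices to evaluate both sides on a pair $(\mu,\bar\nu)$ consisting of a holomorphic and an antiholomorphic tangent vector at $\{E\}$, i.e.\ to compare $(\imath^{*}\omega_{G})(\partial/\partial\vep_{\mu},\partial/\partial\bar\vep_{\nu})$ with $-4\,\omega_{NAB}(\partial/\partial\vep_{\mu},\partial/\partial\bar\vep_{\nu})=-4\cdot\tfrac{\sqrt{-1}}{2}\la\mu,\nu\ra = -2\sqrt{-1}\,\la\mu,\nu\ra$ by \eqref{anb-complex}.

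First I would spell out the real-to-holomorphic identification: the holomorphic tangent vector $\partial/\partial\vep_{\mu}$ pushes forward under $\imath$ to the cohomology class of the cocycle obtained by taking the $(1,0)$-part of $\mu\mapsto\chi_{\mu}$, and similarly $\partial/\partial\bar\vep_{\nu}$ maps to the $(0,1)$-part. Concretely, writing the complexified differential, $\imath_{*}(\partial/\partial\vep_{\mu})$ corresponds to the cocycle with period $\vp_{\mu}$ (the holomorphic Eichler--Shimura period), while $\imath_{*}(\partial/\partial\bar\vep_{\nu})$ corresponds to $-\psi_{\nu}=-\vp_{\nu}^{*}$, the period of the $(1,0)$-form $\nu^{*}=\bar\nu^{\mathrm t}$; this is exactly the content of Corollary~\ref{cor-1}, $\dot f_{+}^{\mu}=\cE$ and $\dot f_{-}^{\mu}=-\cE^{*}$. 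Here one must be careful with the factor coming from $\chi_{\mu}=\vp_{\mu}-\vp_{\mu}^{*}$: its decomposition into $(1,0)$ and $(0,1)$ pieces is $\vp_{\mu}$ and $-\vp_{\mu}^{*}$ respectively, and the bilinearity of $\omega_{G}$ over $\CC$ lets us expand $\omega_{G}(\chi_{\mu},\chi_{\nu})$ into four terms.

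Then the computation is purely formal. Expanding $\omega_{G}(\vp_{\mu}-\vp_{\mu}^{*},\,\vp_{\nu}-\vp_{\nu}^{*})$ by bilinearity gives four terms $\omega_{G}(\vp_{\mu},\vp_{\nu})$, $-\omega_{G}(\vp_{\mu},\vp_{\nu}^{*})$, $-\omega_{G}(\vp_{\mu}^{*},\vp_{\nu})$, $\omega_{G}(\vp_{\mu}^{*},\vp_{\nu}^{*})$. By Theorem~\ref{th: E-S} the first and last vanish (they are $\omega_{G}$ of two holomorphic periods, resp.\ two antiholomorphic periods, the latter via Remark~\ref{one-zero} applied to $\theta_{i}=\mu_{i}^{*}$); the two cross terms are evaluated by the first bilinear relation $\omega_{G}(\vp_{1},\vp_{2}^{*})=\sqrt{-1}\,\la\mu_{1},\mu_{2}\ra$ and its transpose. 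Assembling the surviving contributions, together with the $-4$ that tracks the passage between the real symplectic pairing on $\cK$ and the Hermitian pairing via $\omega_{NAB}=-\tfrac12\im ds^{2}$, yields $\imath^{*}\omega_{G}=-4\,\omega_{NAB}$.

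The main obstacle I anticipate is not any single estimate but rather bookkeeping the constants and sign conventions consistently: the $\tfrac{\sqrt{-1}}{2}$ in \eqref{anb-complex}, the factor $\sqrt{-1}$ relating $\la\cdot,\cdot\ra$ to the area integral, the Hermitian-versus-$\CC$-bilinear pairing, the $-\tfrac12\im$ in the definition of $\omega_{NAB}$, and the sign in $\dot f_{-}^{\mu}=-\cE^{*}$ all conspire, and getting the final numerical constant exactly $-4$ requires care. A secondary point to verify is that $\omega_{G}$, a priori a $2$-form on the character variety, pulls back to a $(1,1)$-form on $\cN$ — this follows because the two $(2,0)$- and $(0,2)$-type contributions vanish by Theorem~\ref{th: E-S}, which is precisely why only the cross terms survive.
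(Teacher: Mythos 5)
Your proposal is correct and follows essentially the same route as the paper: since $\imath_{*}(\mu)=\vp-\vp^{*}$, one expands $\omega_{G}(\vp_{1}-\vp_{1}^{*},\vp_{2}-\vp_{2}^{*})$ by bilinearity, kills the two diagonal terms by Theorem \ref{th: E-S} (and its $(1,0)$-form counterpart in Remark \ref{one-zero}), and evaluates the cross terms by the first bilinear relation, which gives $-\sqrt{-1}(\la\mu_{1},\mu_{2}\ra-\la\mu_{2},\mu_{1}\ra)=-4\,\omega_{NAB}(v_{1},v_{2})$. The paper's proof works directly with the real tangent vectors rather than splitting into $(1,0)$/$(0,1)$ pieces, but this is only a difference of bookkeeping, not of substance.
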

\begin{proof}
Since $\imath_{*}(\mu)=\vp-\vp^{*}$, using Theorem \ref{th: E-S} we obtain
\begin{align*}
\omega_{G}(\imath_{*}(\mu_{1}),\imath_{*}(\mu_{2})) & = \omega_{G}(\vp_{1}-\vp_{1}^{*},\vp_{2}-\vp_{2}^{*})\\
&=-\sqrt{-1}(\la\mu_{1},\mu_{2}\ra - \la\mu_{2},\mu_{1}\ra)\\
&=-4\,\omega_{NAB}(v_{1},v_{2}), 
\end{align*}
where $v_{1}$ and $v_{2}$ are real tangent vectors corresponding to the holomorphic tangent vectors $\mu_{1}$ and $\mu_{2}$.
\end{proof}
\begin{remark} 
Here we are using $\tr$ as the invariant bilinear form $B$ on Lie algebras $\frak{gl}(n,\CC)$ and $\frak{u}(n)$. The form $B$ is negative-definite on $\frak{u}(n)$, which explains the negative sign in Theorem \ref{E-S-G}.
\end{remark}
%%%%%%%%%%%%%%%%%%%
%%%%%%%%%%%%%%%%%%%
\section{Zero curvature connections}  \label{zero-curv} Denote by $\cA(E)$ the zero curvature connections in $E$, compatible with the holomorphic structure in
$E$. It is an affine space over the vector space  $\cH^{1,0}(X,\mathrm{End} \,E)$, and the spaces $\cA(E)$ for $\{E\}\in\cN$ combine into a holomorphic affine
bundle $\cA\to\cN$ over the holomorphic cotangent bundle $T^{*}\cN\to\cN$.  

%%%%%%%%%%%%%%%%%%%%%%%
\subsection{The Riemann-Hilbert correspondence} \label{RH} Let $\cK_{\CC}$ be 
the $ \mathrm{GL}(n,\CC)$-character variety. 
The Riemann-Hilbert correspondence is the monodromy map 
$$\cQ: \cA\to\cK_{\CC},$$
and is described
as follows. Let $\nabla=d+A\in\cA(E)$ be a zero curvature connection. Its holonomy is a representation $\sigma$ of $\pi_{1}\simeq\Gamma$ in $\mathrm{GL}(n,\CC)$. Since  $E$ is stable, $\sigma$ is irreducible, and we put $\cQ(\nabla)=\sigma\in\cK$. 

Concretely, using the Narasimhan-Seshadri theorem, we realize the bundle $E$ as a quotient bundle $E_{\rho}$, so that
a zero curvature connection is $\nabla=d+A$, where  $A(z)$ is a holomorphic matrix-valued $\Gamma$-automorphic form $A(z)$ of weight $2$ with representation $\mathrm{Ad}\,\rho$  on $\HH$, 
$$A(\gamma z)\gamma'(z)=\mathrm{Ad}\,\rho(\gamma) A(z)\quad \text{for all}\quad \gamma\in\Gamma.$$
The parallel transport equation
$$\frac{dg}{dz}+Ag=0$$
has a unique holomorphic solution: a non-degenerate matrix-valued  function $g(z)$ on $\HH$, satisfying $g(z_{0})=I$ and 
\begin{equation}\label{g-iso}
g(\gamma z)=\rho(\gamma)g(z)\sigma(\gamma)^{-1}\quad \text{for all}\quad \gamma\in\Gamma,
\end{equation}
thus specifying a representation $\sigma:\Gamma\to \mathrm{GL}(n,\CC)$.
The map $g$ is an isomorphism $E_{\si}\simeq E_{\rho}$ of the quotient bundles, so
the representation $\sigma$ is irreducible and determines constant transition functions for the bundle $E$. 

Conversely, a choice of constant transitions functions for the bundle $E$ determines a representation $\rho$, and the image of the fiber $\cA(E)$ under the Riemann-Hilbert correspondence is the set of equivalence classes of constant transition functions of the stable
vector bundle $E$. 

%%%%%%%%%%%%%%%%%%%%%%%%%%%%
\subsection{Holomorphic representations} \label{holo-rep} Generalizing Remark \ref{hol-reps} in \S\,\ref{c-coordinates}, here we define, for each coordinate chart, a holomorphic family of irreducible representations. Namely, choose $\{E\}\in\cN$ and $\nabla=d+A\in\cA(E)$ and, using the map $g$ above, realize the bundle $E=E_{\rho}$
as $E_{\si}$. The quotient bundle $E_{\si}$ is a local system with de Rham differential, so Dolbeault cohomology group $H^{0,1}(X,\mathrm{End}\,E_{\si})$ can be realized 
as $H^{0,1}_{\mathrm{dR}}(X,\mathrm{End}\,E_{\si})$ ---the space of antiholomorphic matrix-valued functions $\mu(z)$ on $\HH$,
satisfying
\begin{equation*}
\mu(\gamma z)\overline{\g'(z)}=\mathrm{Ad}\,\si(\gamma)\mu(z)\quad \text{for all}\quad \gamma\in\Gamma.
\end{equation*}
The solution $f(z)$ of the ordinary differential equation 
\begin{equation}\label{diff-eq}
\frac{df}{d\bar{z}}(z)=f(z)\mu(z)
\end{equation}
normalized by $f(z_{0})=I$ satisfies 
\begin{equation}\label{f-ordinary-transform}
f(\g z)=\sigma_{\mu}(\g)f(z)\sigma(\g)^{-1},
\end{equation}
and for small enough $\mu$ determines a family $\si_{\mu}$ of irreducible representations. Choose a basis $\mu_{i}$ in $H^{0,1}_{\mathrm{dR}}(X,\mathrm{End}\,E_{\si})$ and define complex coordinates in a neighborhood of $E_{\si}$ by $\mu=\vep_{1}\mu_{1}+\cdots+\vep_{d}\mu_{d}$. These coordinates are the same as in Proposition \ref{B-coordinates}, and we obtain a holomorphic
family of representations parameterized by $\si$; construction in Remark \ref{hol-reps} corresponds to the case $A=0$.

Indeed, it is sufficient to verify that the map 
$$H^{0,1}_{\mathrm{dR}}(X,\mathrm{End}\,E_{\si})\ni\mu\to \tilde\mu=P(\hat\mu)\in \cH^{0,1}(X,\mathrm{End}\,E_{\rho}),\quad\hat\mu=\mathrm{Ad}\,g(\mu),$$
is injective, and hence is an isomorphism. Namely, if $\tilde\mu=0$, then we have for all $\theta\in \cH^{1,0}(X,\mathrm{End}\,E_{\rho})$,
$$0=\int_{X} \tilde\mu\wedge \theta=\int_{X} \hat\mu\wedge \theta=\int_{X} \mu\wedge \mathrm{Ad}\,g^{-1}(\theta).$$
However, since $g: E_{\si}\to E_{\rho}$ is an isomorphism of holomorphic vector bundles, the map 
$$\mathrm{Ad}\,g^{-1}: \cH^{1,0}(X,\mathrm{End}\,E_{\rho})\to H^{1,0}_{\mathrm{dR}}(X,\mathrm{End}\,E_{\si})$$ 
is an isomorphism, so $\mu=0$. 

The inner product in $H^{0,1}_{\mathrm{dR}}(X,\mathrm{End}\,E_{\si})$ is determined by requiring that the isomorphism $H^{0,1}_{\mathrm{dR}}(X,\mathrm{End}\,E_{\si})\simeq \cH^{0,1}(X,\mathrm{End}\,E_{\rho})$ is an isometry.

This discussion can be summarized by the following commutative diagram

\begin{equation}\label{cd-1}
\begin{tikzcd}
 E_{\rho} \arrow{r}{f^{\hat\mu}} & E_{\rho^{\hat\mu}}\\
    E_{\si}   \arrow{u}[swap]{g}\arrow{r}{f_{\mu}} & E_{\si_{\mu}}  \arrow{u}[swap]{g^{\mu}}
\end{tikzcd}.
\end{equation}
Here $f_{\mu}$ satisfies ordinary differential equation \eqref{diff-eq} with $\mu\in H^{0,1}_{\mathrm{dR}}(X,\mathrm{End}\,E_{\si})$ and the property \eqref{f-ordinary-transform}, while the map $f^{\hat\mu}$ satisfies differential equation \eqref{dbar} with $\hat\mu\in\Omega^{0,1}(X,\mathrm{End}\,E_{\rho})$ 
and the property
$$f^{\hat\mu}(\g z)=\rho^{\hat\mu}(\g)f^{\hat\mu}(z)\rho(\g)^{-1},$$
where $\rho^{\hat\mu}$ is an irreducible unitary representation such that $E_{\si_{\mu}}\simeq E_{\rho^{\hat\mu}}$. If $\mu=\vep\mu_{1}+\cdots+\vep_{d}\mu_{d}$, where $\mu_{i}$ is a basis in $H^{0,1}_{\mathrm{dR}}(X,\mathrm{End}\,E_{\si})$, then corresponding vector fields $\dfrac{\del}{\del\vep_{i}}$ at $E_{\si_{\mu}}$ are given  by $\mathrm{Ad}\,f_{\mu}(\mu_{i})\in H_{\mathrm{dR}}^{0,1}(X,\mathrm{End}\, E_{\si_{\mu}})$, while at $E_{\rho^{\hat\mu}}$ they are given by $P(\mathrm{Ad}\,f^{\hat\mu}({\hat\mu}_{i}))\in \cH^{0,1}(X,\mathrm{End}\, E_{\rho^{\hat\mu}})$.

%%%%%%%%%%%%%%%%%%%%%%%%%
\subsection{Holomorphic sections} \label{hol sections} The Narasimhan-Seshadri connections $\nabla_{E}=d+A_{E}$ (see \S\,\ref{1.1})
define a section $\smallS_{NS}: \cN\to\cA$ of the affine bundle $\cA$, that determines an isomorphism $\imath_{NS}:\cA\simeq T^{*}\cN$ by 
$$\cA(E)\ni d+A\mapsto A-A_{E}\in T^{*}_{E}\cN.$$

On the space
$\Gamma(\cN,\cA)$ of smooth sections of the bundle $\cA$ there is a naturally defined operator $\bar\del: \Gamma(\cN,\cA)\to \Omega^{1,1}(\cN)$.  Namely,  over a coordinate chart $U$
define $\delb \smallS$ as $\delb(\smallS-\smallS_{0})\in\Omega^{1,1}(U)$,
where $\smallS_{0}$ is a holomorphic section of $\mathcal{A}$ over $U$. Clearly, this definition does not depend on the choice of section $\smallS_{0}$.
According to Theorem 1 in \cite{ZT},
\begin{equation} \label{bar-NS}
\bar\del \smallS_{NS}=-2\sqrt{-1}\,\omega_{NAB},
\end{equation}
which shows that $\imath_{NS}$ is not a complex-analytic isomorphism. 
\begin{remark}
Here the pairing between $\cH^{1,0}(X,\mathrm{End}\, E_{\rho})$ and $\cH^{0,1}(X,\mathrm{End}\, E_{\rho})$ is given by
$$\sqrt{-1}\int_{X}\theta\wedge\mu,\quad \theta\in\cH^{1,0}(X,\mathrm{End}\, E_{\rho}),\;\;\mu\in\cH^{0,1}(X,\mathrm{End}\, E_{\rho}),$$
and differs by $\sqrt{-1}$ from the pairing in \cite{ZT}.
\end{remark}
The bundle $\cA\to\cN$ is an analogue of the affine bundle $\cP_{g}\to T_{g}$ of projective connections over the Teichm\"{u}ller space $T_{g}$ of compact Riemann surfaces of genus $g>1$. Bers simultaneous uniformization theorem naturally introduces quasi-Fuchsian projective connections and determines a family of global holomorphic sections $T_{g}\to\cP_{g}$, 
parameterized by the points in $T_{g}$ (see \cite{McM, TT} and references therein). 

However, the bundle $\cA\to\cN$ has no global holomorphic sections, a very natural statement, that we were unable to find in the existing literature. In case 
$\cN=N(n,d)$, the moduli spaces of stable bundles of coprime rank $n$ and degree $d$, analogous statement (where $\cA$ is a bundle of constant central curvature connections) easily follows from
\eqref{bar-NS}. Indeed, if $\smallS$ is a holomorphic section of $\cA\to\cN$, then $\omega_{NAB}=\delb\theta$, where $\theta=\frac{\sqrt{-1}}{2}(\smallS_{NS}-\smallS)$ is a $(1,0)$-form on $\cN$. Since in this case $\cN$ is compact and K\"{a}hler, $\omega_{NAB}$ is a zero class in $H^{2}(\cN,\RR)$, which is obviously a contradiction\footnote{According to Atiyah and Bott \cite{AB}, $\omega_{NAB}$ is a generator of $H^{2}(\cN,\RR)$ in case $(n,d)=1$.}. When $(n,d)\neq 1$ the moduli space $N(n,d)$ needs to be compactified and the geometry is more complicated, so we will not discuss this case.

Instead, we show how to use the  construction in \S\,\ref{holo-rep} to define holomorphic sections over each coordinate chart
in $\cN$.

Namely, for a coordinate chart $U$ centered at $\{E\}$ use the representation $E\simeq E_{\si}$, where $\si$ is a holonomy of some $d+A\in\cA(E)$ (see Sect. \ref{RH}),  and realize each bundle in $U$ as a quotient bundle $E_{\si_{\mu}}$, where $\mu\in H^{0,1}_{\mathrm{dR}}(X,\mathrm{End}\,E_{\si})$. 
Let $d+A_{\si_{\mu}}$ be a connection in $E_{\si_{\mu}}$, associated with the connection $d+0$ in the trivial bundle $\HH\times\CC^{n}\to\HH$. 
The family $\{d+A_{\si_\mu}\}$ 
determines a holomorphic section $\smallS_{\si}$ of the affine bundle $\cA\to\cN$ over $U$.

In analogy with the Teichm\"{u}ller theory, when corresponding projective connections come
from the quasi-Fuchsian uniformization of Riemann surfaces, we call connections $\{d+A_{\si_{\mu}}\}$ \emph{quasi-unitary}. 

For a smooth map $f:\HH\to\mathrm{GL}(n,\CC)$ introduce a notation
$$\mathcal{A}(f)=f^{-1}(z)\frac{\del f}{\del z}(z)dz.$$ 
The operator $\mathcal{A}$ is analogous to the Schwarzian derivative for the case of vector bundles. It satisfies  a vector bundle analog of the Cayley identity,
\begin{equation}\label{caley}
\mathcal{A}(gh)=\mathrm{Ad}\,h^{-1}\cdot\mathcal{A}(g)+\mathcal{A}(h),
\end{equation}
where at least one of the functions $g, h$ is holomorphic.

The following simple result describes the difference between Narasimhan-Seshadri
and quasi-unitary connections as an explicit $(1,0)$-form on $U$. 

\begin{lemma}\label{NS-qu} The $(1,0)$-form $\smallS_{NS}-\smallS_{\si}$ on $U$ is given by
$$\left.(\smallS_{NS}-\smallS_{\si})\right|_{E_{\si_{\mu}}}=\mathcal{A}(g^{\mu})
\in  \cH^{1,0}(X, \mathrm{End}\,E_{\si_{\mu}}),\quad \{E_{\si_{\mu}}\}\in U.$$
\end{lemma}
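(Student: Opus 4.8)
The plan is to work over a fixed coordinate chart $U$ centered at $\{E\}$ and to compute both sections $\smallS_{NS}$ and $\smallS_{\si}$ at an arbitrary point $\{E_{\si_\mu}\}\in U$ by exhibiting explicitly the connection matrices that represent them. The key organizing tool is the commutative diagram \eqref{cd-1} together with the Cayley identity \eqref{caley} for the operator $\mathcal{A}$. First I would recall that, by definition, $\smallS_{\si}$ at $\{E_{\si_\mu}\}$ is the quasi-unitary connection $d+A_{\si_\mu}$, which is the connection in $E_{\si_\mu}$ \emph{associated with the trivial connection} $d+0$ on $\HH\times\CC^n$ via the $\si_\mu$-action; concretely its connection form, pulled back to $\HH$, is $A_{\si_\mu}=0$ (i.e.\ the quotient of $d+0$), so $\smallS_\si$ corresponds to the zero form on the trivialization given by $\si_\mu$. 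Meanwhile $\smallS_{NS}$ at $\{E_{\si_\mu}\}$ is the Narasimhan--Seshadri connection, i.e.\ the canonical metric connection, which on the \emph{unitary} trivialization $E_{\si_\mu}\simeq E_{\rho^{\hat\mu}}$ is again represented by the zero form. The content of the lemma is therefore to transport one of these to the other's trivialization and record the resulting gauge term.

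Next I would set up the gauge transformation. Using diagram \eqref{cd-1}, the isomorphism relating the $\si_\mu$-trivialization to the $\rho^{\hat\mu}$-trivialization is the composite $g^\mu: E_{\si_\mu}\to E_{\rho^{\hat\mu}}$, a holomorphic map $\HH\to\mathrm{GL}(n,\CC)$ satisfying $g^\mu(\g z)=\rho^{\hat\mu}(\g)g^\mu(z)\si_\mu(\g)^{-1}$. Under a holomorphic change of frame $g$, a connection $d+A$ transforms by $A\mapsto g A g^{-1}-dg\, g^{-1}$, but since we are only tracking the $(1,0)$-part and $g^\mu$ is holomorphic, the difference of the two connection forms is precisely the logarithmic-derivative term, which is exactly $\mathcal{A}(g^\mu)=(g^\mu)^{-1}\del_z g^\mu\, dz$ up to sign and up to conjugation by $g^\mu$ — and here is where the Cayley identity \eqref{caley} does the bookkeeping, ensuring the composite of the relevant maps along the two sides of \eqref{cd-1} produces $\mathcal{A}(g^\mu)$ with no leftover terms. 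One then checks that $\mathcal{A}(g^\mu)$ has the correct automorphy: differentiating the cocycle relation $g^\mu(\g z)=\rho^{\hat\mu}(\g)g^\mu(z)\si_\mu(\g)^{-1}$ in $z$ and using that $\rho^{\hat\mu}(\g),\si_\mu(\g)$ are constant shows that $\mathcal{A}(g^\mu)$ transforms as a weight-$2$ $\mathrm{Ad}\,\si_\mu$-automorphic form, i.e.\ it is a well-defined $(1,0)$-form valued in $\mathrm{End}\,E_{\si_\mu}$; that it lies in the \emph{harmonic} subspace $\cH^{1,0}(X,\mathrm{End}\,E_{\si_\mu})$ follows because the difference $\smallS_{NS}-\smallS_\si$ is by construction a section of $T^*_{E_{\si_\mu}}\cN=\cH^{1,0}(X,\mathrm{End}\,E_{\si_\mu})$, the affine bundle $\cA$ being modeled on harmonic Higgs fields.

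Finally I would assemble the pieces, being careful about the two ``layers'' in \eqref{cd-1}: the left vertical map $g$ relates $E_\si$ (the $A\neq 0$ trivialization) to the unitary $E_\rho$, and the right vertical map $g^\mu$ relates $E_{\si_\mu}$ to $E_{\rho^{\hat\mu}}$; the quasi-unitary section $\smallS_\si$ is anchored at the $\si_\mu$-level. The main obstacle, I expect, is precisely this bookkeeping: one must make sure that the base connection $d+A$ defining the chart, the trivial connection $d+0$ defining the quasi-unitary section, and the metric connection defining $\smallS_{NS}$ are all compared in mutually compatible frames, and that in the end only the single term $\mathcal{A}(g^\mu)$ survives — all other contributions ($\mathcal{A}(g)$, the form $A$ itself, etc.) must either cancel in the \emph{difference} $\smallS_{NS}-\smallS_\si$ or be absorbed by the hypothesis ``at least one of the functions is holomorphic'' in \eqref{caley}. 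Once the gauge computation is organized correctly, verifying the automorphy and harmonicity of $\mathcal{A}(g^\mu)$ is routine, completing the proof.
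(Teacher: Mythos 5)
Your proposal is correct and follows essentially the same route as the paper: the Narasimhan--Seshadri connection on $E_{\si_\mu}$ is the pullback of $d+0$ by the holomorphic frame change $g^\mu: E_{\si_\mu}\to E_{\rho^{\hat\mu}}$, hence equals $d+\mathcal{A}(g^\mu)$, while the quasi-unitary connection is $d+0$ in the $\si_\mu$-trivialization, so the difference is $\mathcal{A}(g^\mu)$. The paper's proof is just this two-line gauge computation; your extra worries about cancelling $\mathcal{A}(g)$ and the form $A$ are unnecessary since only the right-hand column of \eqref{cd-1} enters, and the automorphy/harmonicity checks you outline are routine and left implicit in the paper.
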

\begin{proof} The Narasimhan-Seshadri connection in $E_{\si_{\mu}}$ 
is a pullback of the connection $d+0$ in the trivial bundle $\HH\times\CC^{n}\to\CC^{n}$ by the map $g^{\mu}$, and is $d+\mathcal{A}(g^{\mu})$. The quasi-unitary connection in $E_{\si_{\mu}}$ is induced by the connection $d+0$ in the trivial bundle $\HH\times\CC^{n}\to\CC^{n}$,  so  the difference $\smallS_{NS}- \smallS_{\si}$ over $E_{\si_{\mu}}$ is $\mathcal{A}(g^{\mu})\in T^{*}_{\{E_{\si_{\mu}}\}}\cN$.
\end{proof}

%%%%%%%%%%%%%%%%%%%%%%%%
\subsection{Quasi-unitary reciprocity} \label{reciprocity} Our next result is a vector bundle analogue of the quasi-Fuchsian reciprocity: the difference between Fuchsian and quasi-Fuchsian projective structures is a $\del$-closed $(1,0)$-form on the  Teichm\"{u}ller space \cite{McM, TT}. Namely, we have the following statement.
\begin{proposition}[Quasi-unitary reciprocity]\label{qu-rec} Let $\smallS_{\si}$ be a quasi-unitary holomorphic section of the bundle $\cA\to\cN$ over a coordinate chart $U$. Then
$$\del(\smallS_{NS}-\smallS_{\si})=0\quad\text{and}\quad \delb(\smallS_{NS}-\smallS_{\si})=-2\sqrt{-1}\,\omega_{NAB}. $$
\end{proposition}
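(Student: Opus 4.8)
The second equation is immediate: since $\smallS_{\si}$ is a holomorphic section of $\cA$ over $U$, we have $\delb(\smallS_{NS}-\smallS_{\si})=\delb\smallS_{NS}$ by the very definition of the operator $\delb$ on $\Gamma(\cN,\cA)$, and this equals $-2\sqrt{-1}\,\omega_{NAB}$ by \eqref{bar-NS}. So the entire content of the proposition is the first equation, $\del(\smallS_{NS}-\smallS_{\si})=0$, i.e.\ that the $(1,0)$-form $\omega:=\smallS_{NS}-\smallS_{\si}$ on $U$ is $\del$-closed. By Lemma~\ref{NS-qu}, at the point $\{E_{\si_\mu}\}$ this form equals $\cA(g^{\mu})=(g^{\mu})^{-1}\,\partial g^{\mu}\,dz\in\cH^{1,0}(X,\mathrm{End}\,E_{\si_\mu})$, where $g^{\mu}$ is the holomorphic map intertwining $\rho^{\hat\mu}$ and $\si_{\mu}$ (diagram \eqref{cd-1}). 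So I must show: for any two coordinate directions $\mu_{i},\mu_{j}$, the antisymmetrized directional derivative of $\mu\mapsto\cA(g^{\mu})$ along $\vep_{i},\vep_{j}$ vanishes. Equivalently, writing $\omega=\sum_i \omega_i\,d\vep_i$ with $\omega_i(\mu)\in\cH^{1,0}(X,\mathrm{End}\,E_{\si_\mu})$ the Higgs-field component, I need $\partial_{\vep_i}\omega_j=\partial_{\vep_j}\omega_i$ as elements of (the fiber of) $T^{*}\cN$, where the derivative is taken with the appropriate connection accounting for the varying bundle $E_{\si_\mu}$.

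The plan is to compute $\partial_{\vep_i}\omega_j$ at $\mu=0$ (the general point follows by re-centering the chart at $\{E_{\si_\mu}\}$) using the infinitesimal machinery of \S\ref{2.5}, adapted to the present setup. At $\mu=0$ the map $g^{0}$ is (a scalar multiple of) the identity, so to first order $\cA(g^{\mu})=\partial(\dot g)\,dz+O(\mu^2)$ where $\dot g=\partial_{\vep}g^{\vep\mu}|_{0}$. By the analogue of Corollary~\ref{cor-1} in the de Rham realization — the diagram \eqref{cd-1} shows $g^{\mu}$ here plays the role of $g^{\hat\mu}$ there, with $\hat\mu=\mathrm{Ad}\,g(\mu)$ — we have $\dot g_{+}=-\widehat{\cE}^{*}$ type formulas, i.e.\ the first variation of $g^{\mu}$ is expressed through the Eichler integral of $\hat\mu$ (or of its harmonic projection). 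Hence $\omega_i$, to leading order, is $\partial$ of a primitive built from the Eichler integral of $\mu_i$, and the antisymmetrization $\partial_{\vep_i}\omega_j-\partial_{\vep_j}\omega_i$ becomes an integral over $X$ (after projecting to the Higgs component, which is the $\cH^{1,0}$-valued pairing $\theta\mapsto\int_X\theta\wedge(\cdot)$) of a bilinear expression in the Eichler integrals $\cE_i,\cE_j$ of $\mu_i,\mu_j$ and their transforms under $g$. The key point is that this bilinear expression is a \emph{symmetric} function of $(i,j)$ — one recognizes it, after a Stokes-theorem integration by parts (just as in the proof of Theorem~\ref{th: E-S}), as $\omega_G$ evaluated on the Eichler–Shimura cocycles $\psi_i,\psi_j$ of the harmonic $(1,0)$-representatives, and by the \emph{second} bilinear relation of Theorem~\ref{th: E-S} (or Remark~\ref{one-zero}), $\omega_G(\psi_i,\psi_j)=0$. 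This vanishing is precisely what gives $\del$-closedness.

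Concretely, the steps in order: (1) reduce to computing the mixed second derivative $\partial_{\vep_i}\partial_{\vep_j}$ of $g^{\mu}$ — or rather of $\cA(g^{\mu})$ — at $\mu=0$, using the Cayley identity \eqref{caley} to split $\cA(g^{\mu})$ and the variation formulas of \S\ref{2.5}; (2) identify the antisymmetric part of the second variation, after applying the orthogonal projection to $\cH^{1,0}$ (which is how the form lands in $T^{*}_{\{E\}}\cN$), with an expression of the form $\int_X(\text{something built from }\cE_i)\wedge(\text{harmonic }(1,0)\text{ part}) - (i\leftrightarrow j)$; (3) integrate by parts via Stokes over the fundamental domain $F$, exactly as in the proof of Theorem~\ref{th: E-S}, turning the bulk integral into a boundary sum over the edges $S_i,\lambda_i(S_i)$ of \eqref{f-domain}, which reorganizes into $\omega_G$ of the relevant cocycles; (4) invoke the second Riemann bilinear relation $\omega_G(\psi_1,\psi_2)=0$ from Theorem~\ref{th: E-S} (in the $(1,0)$ form, Remark~\ref{one-zero}) to conclude the antisymmetrized derivative is zero, hence $\del\omega=0$ on $U$.

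\textbf{Main obstacle.} The delicate point is bookkeeping the variation of the \emph{bundle} $E_{\si_\mu}$ itself: $\omega_j(\mu)$ lives in $\cH^{1,0}(X,\mathrm{End}\,E_{\si_\mu})$, a space that moves with $\mu$, so "$\partial_{\vep_i}\omega_j$" requires transporting via $\mathrm{Ad}\,f_{\mu}$ (as in \S\ref{c-coordinates} and diagram \eqref{cd-1}) and then projecting by $P_{\mu}$; the commutator terms between $\partial_{\vep_i}$ and this transport/projection must be shown to either cancel in the antisymmetrization or themselves vanish because $\rho^{\hat\mu}$ is unitary (the mechanism used repeatedly in Lemmas~\ref{iso-period}, \ref{a-lemma} and Corollary~\ref{cor-0}). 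Getting the exterior-derivative formula $(\del\omega)(\partial_{\vep_i},\partial_{\vep_j}) = \partial_{\vep_i}(\omega(\partial_{\vep_j})) - \partial_{\vep_j}(\omega(\partial_{\vep_i})) - \omega([\partial_{\vep_i},\partial_{\vep_j}])$ right in coordinates where the frame is non-holomorphic is where the care is needed; but since the $\vep$-coordinates are genuine holomorphic coordinates on $\cN$, the bracket term drops and what remains is the symmetry of the Hessian, which the Riemann bilinear relation supplies.
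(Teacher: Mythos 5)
Your proposal is correct in outline and shares all the preliminary reductions with the paper's proof --- localizing at the center of a chart, Lemma~\ref{NS-qu}, the Cayley identity applied to diagram~\eqref{cd-1}, the Cartan formula with commuting coordinate fields $L_{\mu},L_{\nu}$, and the variational formula $\dot f^{\hat\mu}_{+}=\cE+F$ coming from Corollary~\ref{cor-1} and the Hodge decomposition $\hat\mu=P(\hat\mu)+\delb F$ --- but your final step is genuinely different from the paper's. You antisymmetrize first: indeed $L_{\mu}\vartheta(L_{\nu})-L_{\nu}\vartheta(L_{\mu})=\sqrt{-1}\iint_{F}\tr\bigl(d\dot f^{\hat\mu}_{+}\wedge d\dot f^{\hat\nu}_{+}\bigr)$, and Stokes over the fundamental polygon turns this into a boundary sum that reorganizes (exactly as in the proof of Theorem~\ref{main}) into a multiple of $\omega_{G}$ evaluated on the cocycles of the $\dot f_{+}$'s, which vanishes by the second bilinear relation of Theorem~\ref{th: E-S}. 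The paper instead observes that, since the Eichler integral $\cE$ is antiholomorphic, $\del\dot f^{\hat\mu}_{+}/\del z=\del F/\del z$, so the harmonic parts drop out of each directional derivative \emph{individually}, and $L_{\mu}\vartheta(L_{\nu})=\sqrt{-1}\iint_{F}\tr\bigl(\tfrac{\del F}{\del z}\tfrac{\del G}{\del\bar z}\bigr)dz\wedge d\bar z$ is manifestly symmetric by an elementary integration by parts of automorphic functions --- no boundary terms, no cocycle combinatorics. Your route buys a visible structural parallel with Theorem~\ref{main} and makes the role of the Riemann bilinear relations explicit; the paper's is shorter and avoids the fundamental-polygon bookkeeping. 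Your dispatch of the second identity via \eqref{bar-NS} is legitimate (the paper reproves it with the same machinery only for the reader's convenience), and your handling of the ``main obstacle'' is exactly what the paper does: work in the fixed de Rham realization, transport by $\mathrm{Ad}\,f_{\mu}$, and use $[L_{\mu},L_{\nu}]=0$.

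Two small corrections. First, at a general chart $g^{0}=g$ is the parallel-transport isomorphism $E_{\si}\to E_{\rho}$, not a scalar multiple of the identity, so $\mathcal{A}(g^{\mu})$ has a nonzero constant term $\mathcal{A}(g)$; this is harmless because it is killed by differentiation, and the conjugations $\mathrm{Ad}\,f_{\vep\mu}^{-1}$ and $\mathrm{Ad}\,g^{-1}$ are exactly what the Cayley-identity splitting \eqref{main-identity} in your step (1) is designed to absorb. Second, the function entering the Stokes computation is $\dot f^{\hat\mu}_{+}=\cE+F$ with $F$ automorphic, so its cocycle is $\vp$, the Eichler--Shimura period of the harmonic $(0,1)$-part $P(\hat\mu)$; the vanishing you need is therefore $\omega_{G}(\vp_{1},\vp_{2})=0$ from Theorem~\ref{th: E-S} itself, not the $(1,0)$-version of Remark~\ref{one-zero}.
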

\begin{proof}
Let $\vartheta=\smallS_{NS}-\smallS_{\si}\in\Omega^{1,0}(U)$, where $U$ is a coordinate chart centered at $E_{\rho}\simeq E_{\si}$.
Since each $\{E\}\in U$ has a coordinate chart centered at $E$, it follows from the discussion in \S\,\ref{c-coordinates} and \S\,\ref{holo-rep}, that it is sufficient to prove that
$$\del\vartheta(\mu,\nu)=0$$
for all $\mu,\nu\in H^{0,1}_{\mathrm{dR}}(X,\mathrm{End}\,E_{\si})$. Denote by $L_{\mu}=\dfrac{\del}{\del\vep_{\mu}}$ and $L_{\nu}=\dfrac{\del}{\del\vep_{\nu}}$ corresponding vector fields on $U$.
Since $[L_{\mu}, L_{\nu}]=0$,
we have by Cartan formula 
$$\del\vartheta\left(L_{\mu},L_{\nu}\right)=L_{\mu}\vartheta(L_{\nu})-L_{\nu}\vartheta(L_{\mu}).$$

Now consider commutative diagram \eqref{cd-1} with $\vep\mu$ instead of $\mu$. Using Lemma \ref{NS-qu}, we have at $\{E_{\si_{\vep\mu}}\}\in U$,
\begin{align*}
\vartheta(L_{\nu}) &=\sqrt{-1}\int_{X}\mathcal{A}(g^{\vep\mu})\wedge \mathrm{Ad}\,f_{\vep\mu}(\nu)
=\sqrt{-1}\int_{X}\mathrm{Ad}\,f^{-1}_{\vep\mu}\cdot \mathcal{A}(g^{\vep\mu})\wedge \nu.
\end{align*}
It follows from  \eqref{cd-1} and  \eqref{caley} that
\begin{equation}\label{main-identity}
\mathrm{Ad}\,f^{-1}_{\vep\mu}\cdot \mathcal{A}(g^{\vep\mu})=\mathrm{Ad}\, g^{-1}\cdot\mathcal{A}(f^{\vep\hat\mu})+\mathcal{A}(g)-\mathcal{A}(f_{\vep\mu}).
\end{equation}
Differentiate, in a complex-analytic sense, equation \eqref{main-identity} with respect to $\vep$ and set $\vep=0$. Since $f_{\vep\mu}(z)$ is holomorphic in $\vep$ and antiholomorphic in $z$,  
we get
$$\frac{\del}{\del z}\left(\left.\frac{\del}{\del\vep}\right|_{\vep=0}f_{\vep\mu}(z)\right)=0,$$
so
$$\left.\frac{\del}{\del\vep}\right|_{\vep=0}\left(\mathrm{Ad}\,f^{-1}_{\vep\mu}\cdot \mathcal{A}(g^{\vep\mu})\right)=\mathrm{Ad}\,g^{-1}\left(\frac{\del \dot{f}^{\hat\mu}_{+}}{\del z}\right).$$
Thus we obtain
\begin{align*}
L_{\mu}\vartheta\left(L_{\nu}\right) &=\sqrt{-1}\int_{X}\mathrm{Ad}\,g^{-1}\left(\frac{\del \dot{f}^{\hat\mu}_{+}}{\del z}\right)
dz\wedge \nu=\sqrt{-1}\int_{X}\frac{\del \dot{f}^{\hat\mu}_{+}}{\del z}
dz\wedge \hat\nu\\
&=\sqrt{-1}\iint_{F}\tr\left\{\frac{\del \dot{f}^{\hat\mu}_{+}}{\del z}(z)
\hat\nu(z)\right\} dz\wedge d\bar{z}.
\end{align*}
Using Hodge decomposition
$$\hat{\mu}=P(\hat\mu)+\delb F,\quad \hat{\nu}=P(\hat\nu)+\delb G,$$
where $P(\hat\mu), P(\hat\nu) \in \cH^{0,1}(X,\mathrm{End}\,E_{\rho})$ and $F, G\in \Omega^{0}(X,\mathrm{End}\,E_{\rho})$, and taking into account Corollary \ref{cor-1}, we obtain
$$\dot{f}^{\hat\mu}_{+}=\cE+F,$$
where $\cE$ is the Eichler integral for  $P(\hat\mu)$. Since $\cE$ is antiholomorphic and $F$ and $G$ are $\Gamma$-automorphic functions with representation $\mathrm{Ad}\,\rho$,  by Stokes theorem we have
\begin{align*}
L_{\mu}\vartheta(L_{\nu}) &=\sqrt{-1}\iint_{F}\tr\left\{\frac{\del F}{\del z}(z)\left(P(\hat\nu)+\frac{\del G}{\del\bar{z}}(z)\right)\right\}dz\wedge d\bar{z}\\
&=\sqrt{-1}\iint_{F}\tr\left\{\frac{\del F}{\del z}(z)\frac{\del G}{\del\bar{z}}(z)\right\}dz\wedge d\bar{z}\\
&=\sqrt{-1}\iint_{F}\tr\left\{\frac{\del G}{\del z}(z)\frac{\del F}{\del\bar{z}}(z)\right\}dz\wedge d\bar{z}=L_{\nu}\vartheta(L_\mu). 
\end{align*}

For convenience of the reader here is the proof of the second statement, which is Theorem 1 in \cite{ZT}. Since $\vartheta$ is a $(1,0)$-form and $[L_{\nu},L_{\bar\mu}]=0$,
Cartan formula simplifies
$$\delb\vartheta\left(L_{\nu},L_{\bar\mu}\right)=-L_{\bar\mu}\vartheta(L_{\nu}).$$
Differentiating, in a complex-analytic sense, equation \eqref{main-identity} with respect to $\bar\vep$ and setting $\vep=0$, we obtain
$$\left.\frac{\del}{\del\bar\vep}\right|_{\vep=0}\left(\mathrm{Ad}\,f^{-1}_{\vep\mu}\cdot \mathcal{A}(g^{\vep\mu})\right)=\mathrm{Ad}\,g^{-1}\left(\frac{\del \dot{f}^{\hat\mu}_{-}}{\del z}\right).$$
It follows from the Hodge decomposition and Corollary \ref{cor-1} that $\dot{f}_{-}^{\hat\mu}=-\cE^{*}$, so 
\begin{align*}
\delb\vartheta\left(L_{\nu},L_{\bar\mu}\right)&=\sqrt{-1}\int_{X}\mathrm{Ad}\,g^{-1}\cdot P(\hat\mu)^{*}\wedge\nu=\sqrt{-1}\int_{X}P(\hat\mu)^{*}\wedge\hat\nu\\
&=\la P(\hat\nu),P(\hat\mu)\ra,
\end{align*}
and it follows from \eqref{anb-complex} that $\delb\vartheta=-2\sqrt{-1}\,\omega_{NAB}$.
\end{proof}

\begin{remark} Formula $L_{\mu}\vartheta(L_{\nu})=L_{\nu}\vartheta(L_\mu)$ is a precise analogue of the quasi-Fuchsian reciprocity (see \cite[Theorem 6.1]{McM} and \cite[Proposition 4.1]{TT}). 
Our commutative diagram \eqref{cd-1} is analogous to the commutative diagram (3.2) in \cite{TT}. However, for weight 2 automorphic forms we no longer can use series over the group $\Gamma$,
as in \cite{McM, TT} for the forms of weight 4, so we replace this argument by using the Hodge decomposition.
\end{remark}

%%%%%%%%%%%%%%%%%%%%%%%%%%%%%
%%%%%%%%%%%%%%%%%%%%%%%%%%%%%
\section{Liouville form on $\cA$}\label{Liouville}

%%%%%%%%%%%%%%%%%%%%%%%%%%%%
\subsection{Holomorphic symplectic form}\label{hol symplectic}
The total space of the holomorphic cotangent bundle $T^{*}M$ of a complex manifold $M$ carries a natural holomorphic symplectic $(2,0)$-form $\omega_{L}$ --- the Liouville symplectic form.
Namely, $\omega_{L}=d\theta_{L}$, where the holomorphic $(1,0)$-form $\theta_{L}$ --- the Liouville 1-form --- is defined by
$$\theta_{L}(u)=p(\pi_{*}u),\quad \text{where}\quad u\in T_{(p,q)}T^{*}M,\;\; p\in T^{*}_{q}M,$$
and $\pi:T^{*}M\to M$ is the canonical projection.
It is convenient to use on $T^{*}U$ local coordinates 
$$(\bm{p} ,\bm{q}) =(p_{1},\dots,p_{n}, q^{1},\dots,q^{n}),$$
where  $\bm{q} =(q^{1},\dots,q^{n})$ --- are complex coordinates on  $U\subset M$, and complex coordinates $\bm{p} =(p_{1},\dots,p_{n})$ correspond to the basis $dq^{1},\dots, dq^{n}$ in $T^{*}_{q}M$ for $q\in U$. In this notation, commonly used by physicists, we have
$$\theta_{L}=\bm{p}d\bm{q}=\sum_{k=1}^{n}p_{k}dq^{k}\quad\text{and}\quad \omega_{L}=d\bm{p}\wedge d\bm{q}=\sum_{k=1}^{n}dp_{k}\wedge dq^{k}. $$

The following statement is trivial, but rather useful (see, e.g., \cite{Loustau}).
\begin{lemma}\label{taut}
Let $\alpha$ be a $(1,0)$-form on $M$, considered as a section of $\pi: T^{*}M\to M$. Then 
$$\alpha^{*}(\theta_{L})=\alpha\quad\text{and}\quad \alpha^{*}(\omega_{L})=d\alpha.$$
\end{lemma}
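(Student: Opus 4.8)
The plan is to verify both identities by direct computation in the local canonical coordinates $(\bm{p},\bm{q})$ on $T^{*}U$, reducing everything to the tautological behaviour of $\theta_{L}$. First I would write the $(1,0)$-form $\alpha$ on $U\subset M$ in coordinates as $\alpha=\sum_{k=1}^{n}\alpha_{k}(\bm{q})\,dq^{k}$, where the $\alpha_{k}$ are holomorphic functions on $U$. Viewing $\alpha$ as a section $s_{\alpha}\colon M\to T^{*}M$, the point is that in the local trivialization this section is simply $\bm{q}\mapsto(\bm{p},\bm{q})=(\alpha_{1}(\bm{q}),\dots,\alpha_{n}(\bm{q}),q^{1},\dots,q^{n})$, i.e. $p_{k}\circ s_{\alpha}=\alpha_{k}$ and $q^{k}\circ s_{\alpha}=q^{k}$.

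Next I would compute the pullback $s_{\alpha}^{*}\theta_{L}$. Since $\theta_{L}=\sum_{k}p_{k}\,dq^{k}$ and pullback commutes with $d$ and with products of functions, $s_{\alpha}^{*}\theta_{L}=\sum_{k}(s_{\alpha}^{*}p_{k})\,d(s_{\alpha}^{*}q^{k})=\sum_{k}\alpha_{k}(\bm{q})\,dq^{k}=\alpha$. This is the first identity. Alternatively, and perhaps more cleanly, I would give the coordinate-free one-line argument: for $u\in T_{q}M$ one has $(s_{\alpha})_{*}u\in T_{(q,\alpha_{q})}T^{*}M$ with $\pi_{*}(s_{\alpha})_{*}u=(\pi\circ s_{\alpha})_{*}u=u$ because $\pi\circ s_{\alpha}=\mathrm{id}_{M}$, hence $(s_{\alpha}^{*}\theta_{L})(u)=\theta_{L}((s_{\alpha})_{*}u)=\alpha_{q}(\pi_{*}(s_{\alpha})_{*}u)=\alpha_{q}(u)$; this uses only the definition of $\theta_{L}$ given just above. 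The second identity is then immediate from naturality of the exterior derivative: $s_{\alpha}^{*}\omega_{L}=s_{\alpha}^{*}d\theta_{L}=d\,s_{\alpha}^{*}\theta_{L}=d\alpha$.

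There is essentially no obstacle here; the only point requiring a word of care is the holomorphic category, namely that $\theta_{L}$ and $\omega_{L}$ are $(1,0)$- and $(2,0)$-forms and $\alpha$ is holomorphic, so that $d$ agrees with $\partial$ on all the objects involved and the pullback by the holomorphic section $s_{\alpha}$ preserves bidegree; this is why $d\alpha=\partial\alpha$ is again a holomorphic (indeed $\partial$-exact $(2,0)$) form, consistent with $s_{\alpha}^{*}\omega_{L}$ being a $(2,0)$-form. Everything else is the formal manipulation above, valid verbatim in the complex-analytic setting.
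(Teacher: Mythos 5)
Your proof is correct and follows essentially the same route as the paper: the coordinate computation $s_{\alpha}^{*}\theta_{L}=\sum_{k}\alpha_{k}\,dq^{k}=\alpha$ together with the coordinate-free one-liner using $\pi\circ s_{\alpha}=\id_{M}$, and then $\alpha^{*}(\omega_{L})=d(\alpha^{*}\theta_{L})=d\alpha$ by naturality of $d$. No gaps.
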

\begin{proof}
In physics notation, this is a tautology. Namely, in local coordinates
$$\alpha=\bm{a}(\bm{q})d\bm{q}=\sum_{k=1}^{n}a_{i}(q^{1},\dots,q^{n})dq^{i},$$
so $M\ni q\mapsto \alpha_{q}=\bm{a}(\bm{q})\in T^{*}_{q}M$ 
and $\alpha^{*}(\theta_{L})=\alpha$. A formal argument is also trivial: since $\pi\circ\alpha=\id_{M}$, then for $v\in T_{q}M$ we have $\alpha_{\ast}(v)\in T_{(\alpha_{q},q)}T^{\ast}M$ and
$$\alpha^{\ast}(\theta)(v)=\theta(\alpha_{\ast}(v))=\alpha_{q}(\pi_{\ast}(\alpha_{\ast} v))=\alpha(v).\qedhere$$
\end{proof}

%%%%%%%%%%%%%%%%%%%%%%
\subsection{Pullback of Liouville form} \label{L-A} Here we apply results in \S\,\ref{hol symplectic} to the case $M=\cN$. The Liouville form $\omega_{L}$ at a point $(\theta,E)\in T^{*}\cN$, where $\theta\in\cH^{1,0}(X,\mathrm{End}\,E)$, evaluated at the tangent vectors $(\theta_{1},\mu_{1}), (\theta_{2},\mu_{2})\in T_{(\theta,E)}T^{*}\cN$, 
can be written as
\begin{equation}\label{L-N}
\omega_{L}((\theta_{1},\mu_{1}),(\theta_{2},\mu_{2}))
=\frac{\sqrt{-1}}{2}\int_{X}(\theta_{1}\wedge\mu_{2}-\theta_{2}\wedge\mu_{1}),
\end{equation}
where $\mu_{1}, \mu_{2}\in \cH^{0,1}(X,\mathrm{End}\,E)$ and $\theta_{1},\theta_{2}\in\cH^{1,0}(X,\mathrm{End}\,E)$.

As discussed in \S\,\ref{hol sections},
every smooth section $\smallS: \cN\to\cA$ determines an isomorphism $\imath_{\smallS}:\cA\xrightarrow{\sim} T^{\ast}\cN$ by
$$\cA(E)\ni A\mapsto \imath_{\smallS}(A)=A-\smallS(E)\in T^{*}_{E}\cN$$
and turns $\cA$ into a symplectic manifold with the symplectic form  
$\imath_{\smallS}^{*}(\omega_{L})$. If the section $\smallS$ was holomorphic, $\imath_{\smallS}^{*}(\omega_{L})$ would be a holomorphic symplectic form on $\cA$, but the bundle $\cA\to\cN$ has no global holomorphic sections.

Nevertheless, one can use local holomorphic sections to pull back the Liouville symplectic form $\omega_{L}$ to $\cA$. The following simple statement (cf. \cite{Loustau})
guarantees when such local symplectic forms on $\cA$ agree.
\begin{lemma}\label{unique}
Let $\smallS_{1}$ and $\smallS_{2}$ be holomorphic sections of $\cA\to\cN$ over an open $U\subset \cN$. The equality $\imath_{\smallS_{1}}^{*}(\omega_{L})=\imath_{\smallS_{2}}^{*}(\omega_{L})$ on $\cA_{U}$ --- a restriction of $\cA$ over $U$ ---
is equivalent to the condition $\pi^{*}(\del(\smallS_{1}-\smallS_{2}))=0$, where $\pi: T^{*}\cN\to\cN$ is the canonical projection. 
\end{lemma}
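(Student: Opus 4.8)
The plan is to express the difference of the two identifications $\imath_{\smallS_{1}},\imath_{\smallS_{2}}\colon\cA_{U}\xrightarrow{\sim}T^{*}\cN|_{U}$ as a fibrewise translation on $T^{*}\cN$ by a holomorphic $(1,0)$-form, to compute how the Liouville form transforms under such a translation, and then to read off the claimed criterion from injectivity of pullback along submersions.

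First I would note that $\alpha\eqdef\smallS_{1}-\smallS_{2}$ is a holomorphic $(1,0)$-form on $U$: since $\cA\to\cN$ is a holomorphic affine bundle modelled on the holomorphic vector bundle $T^{*}\cN\to\cN$, the difference of two holomorphic sections over $U$ is a holomorphic section of $T^{*}\cN$ over $U$, i.e.\ $\alpha\in\Omega^{1,0}(U)$ with $\delb\alpha=0$. Writing $p_{\cA}\colon\cA\to\cN$ for the bundle projection and $\tau_{\alpha}\colon T^{*}\cN|_{U}\to T^{*}\cN|_{U}$ for the fibrewise translation $(p,q)\mapsto(p+\alpha(q),q)$, the definition $\imath_{\smallS}(A)=A-\smallS(E)$ gives at once the two bookkeeping identities $\imath_{\smallS_{2}}=\tau_{\alpha}\circ\imath_{\smallS_{1}}$ and $\pi\circ\imath_{\smallS_{j}}=p_{\cA}$ for $j=1,2$.

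The main step is the computation of $\tau_{\alpha}^{*}\theta_{L}$. Since $\pi\circ\tau_{\alpha}=\pi$, one has $\pi_{*}\circ(\tau_{\alpha})_{*}=\pi_{*}$, so for $u\in T_{(p,q)}T^{*}\cN$
$$(\tau_{\alpha}^{*}\theta_{L})(u)=\theta_{L}\big((\tau_{\alpha})_{*}u\big)=\big(p+\alpha(q)\big)(\pi_{*}u)=\theta_{L}(u)+(\pi^{*}\alpha)(u),$$
that is, $\tau_{\alpha}^{*}\theta_{L}=\theta_{L}+\pi^{*}\alpha$ (the same identity also drops out of Lemma \ref{taut} in the coordinates $(\bm p,\bm q)$ of \S\ref{hol symplectic}). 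Applying $d$ and using $d\alpha=\del\alpha$, valid because $\delb\alpha=0$, yields $\tau_{\alpha}^{*}\omega_{L}=\omega_{L}+\pi^{*}(\del\alpha)$. Pulling this back along $\imath_{\smallS_{1}}$ and using $\pi\circ\imath_{\smallS_{1}}=p_{\cA}$ gives
$$\imath_{\smallS_{2}}^{*}\omega_{L}-\imath_{\smallS_{1}}^{*}\omega_{L}=\imath_{\smallS_{1}}^{*}\big(\tau_{\alpha}^{*}\omega_{L}-\omega_{L}\big)=p_{\cA}^{*}(\del\alpha).$$

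To finish, I would invoke that $p_{\cA}$ (and likewise $\pi$) is a surjective holomorphic submersion, so its pullback on differential forms is injective; hence $\imath_{\smallS_{1}}^{*}\omega_{L}=\imath_{\smallS_{2}}^{*}\omega_{L}$ on $\cA_{U}$ if and only if $\del\alpha=0$ on $U$, equivalently $\pi^{*}\big(\del(\smallS_{1}-\smallS_{2})\big)=0$ on $T^{*}\cN|_{U}$. There is no real obstacle in this argument; the only points that need care are keeping the affine-bundle structure and the identifications $T_{E}\cN\simeq\cH^{0,1}(X,\mathrm{End}\,E)$, $T^{*}_{E}\cN\simeq\cH^{1,0}(X,\mathrm{End}\,E)$ of \S\ref{m space} straight, and noticing that it is precisely the holomorphy of $\smallS_{1}-\smallS_{2}$ that makes the $\del$-closedness condition — rather than $d$-closedness — the correct one.
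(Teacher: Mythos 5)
Your proposal is correct and follows essentially the same route as the paper: both reduce the statement to the fibrewise-linearity of $\theta_{L}$ (the paper via Lemma \ref{taut} applied to $\smallS_{2}-\smallS_{1}$, you via the explicit translation $\tau_{\alpha}$, which is the same computation), obtaining $\imath_{\smallS_{2}}^{*}\omega_{L}-\imath_{\smallS_{1}}^{*}\omega_{L}=p_{\cA}^{*}(\del(\smallS_{1}-\smallS_{2}))$ up to sign. Your added remark that the "only if" direction uses injectivity of pullback along the surjective submersion is a worthwhile explicit justification of a step the paper leaves implicit, but it does not constitute a different argument.
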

\begin{proof} We have for $A\in\cA(E)$, $\{E\}\in U$,
\begin{align*}
 (\imath_{\smallS_{1}}- \imath_{\smallS_{2}})(A) & =A-\smallS_{1}(E)-(A-\smallS_{2}(E)) \\ 
 &= \smallS_{2}(E)-\smallS_{1}(E),
\end{align*}
so $\imath_{\smallS_{1}}- \imath_{\smallS_{2}}=(\smallS_{2}-\smallS_{1})\circ \pi$.  It follows from Lemma \ref{taut} that
 $$\imath_{\smallS_{1}}^{*}(\theta_{L})-\imath_{\smallS_{2}}^{*}(\theta_{L})=\pi^{*}((\smallS_{2}-\smallS_{1})^{*}\theta_{L})=\pi^{*}(\smallS_{2}-\smallS_{1}).$$
and
$$\imath_{\smallS_{1}}^{*}(\omega_{L})-\imath_{\smallS_{2}}^{*}(\omega_{L})=\pi^{*}(d(\smallS_{2}-\smallS_{1}))=\pi^{*}(\del(\smallS_{1}-\smallS_{2})).\qedhere$$
\end{proof}

Now over each coordinate chart $U$ use a quasi-unitary section $\smallS_{\si}$ of the bundle $\cA\to\cN$ to define a holomorphic $(2,0)$ symplectic form  
$$\omega_{U,\si}=\imath_{\smallS_{\si}}^{*}(\omega_{L})\quad\text{over}\quad\cA_{U}.$$
We have the following result.

\begin{proposition}\label{independence}
Holomorphic $(2,0)$-forms $\omega_{U,\si}$ determine a holomorphic symplectic form $\omega$ on $\cA$. The form $\omega$ does not depend on the choices of local quasi-unitary sections.
\end{proposition}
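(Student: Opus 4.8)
The plan is to reduce Proposition~\ref{independence} to the gluing criterion already established in Lemma~\ref{unique}. First I would observe that the statement has two parts: (a) the locally defined forms $\omega_{U,\si}$ patch together into a globally defined $(2,0)$-form $\omega$ on $\cA$, and (b) this $\omega$ is independent of which quasi-unitary section one uses over each chart. Both parts will follow from a single fact: \emph{any} two quasi-unitary sections $\smallS_{\si_{1}}$ and $\smallS_{\si_{2}}$ defined over (possibly different, overlapping) coordinate charts yield the same pullback over the common domain, i.e. $\imath_{\smallS_{\si_{1}}}^{*}(\omega_{L})=\imath_{\smallS_{\si_{2}}}^{*}(\omega_{L})$ on the intersection.

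By Lemma~\ref{unique}, this amounts to checking that $\pi^{*}\bigl(\del(\smallS_{\si_{1}}-\smallS_{\si_{2}})\bigr)=0$, equivalently $\del(\smallS_{\si_{1}}-\smallS_{\si_{2}})=0$ as a $(2,0)$-form on the overlap of the charts. The key step is to rewrite this difference using the Narasimhan--Seshadri section as an intermediary:
$$\smallS_{\si_{1}}-\smallS_{\si_{2}} = (\smallS_{\si_{1}}-\smallS_{NS}) - (\smallS_{\si_{2}}-\smallS_{NS}) = -(\smallS_{NS}-\smallS_{\si_{1}}) + (\smallS_{NS}-\smallS_{\si_{2}}).$$
Now each term $\smallS_{NS}-\smallS_{\si_{j}}$ is a genuine $(1,0)$-form on its chart (Lemma~\ref{NS-qu}), and by the quasi-unitary reciprocity (Proposition~\ref{qu-rec}) each is $\del$-closed: $\del(\smallS_{NS}-\smallS_{\si_{j}})=0$. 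Hence $\del(\smallS_{\si_{1}}-\smallS_{\si_{2}})=0$ on the overlap, and Lemma~\ref{unique} gives the agreement of the two pulled-back symplectic forms there. One small point to be careful about: $\smallS_{NS}$ is only a smooth (not holomorphic) section, so $\smallS_{NS}-\smallS_{\si_{j}}$ is a smooth $(1,0)$-form and the difference of two such is again a smooth $(1,0)$-form whose $\del$ vanishes — which is exactly what Lemma~\ref{unique} needs, since there $\smallS_{1}-\smallS_{2}$ is a holomorphic $(1,0)$-form valued section but the proof only uses $\del$ of it.

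With local agreement in hand, the global form $\omega$ is well defined by declaring $\omega|_{\cA_{U}}=\omega_{U,\si}$ for any chart $U$ and any quasi-unitary section $\smallS_{\si}$ over $U$; the computation above shows this is consistent on overlaps, so the $\omega_{U,\si}$ glue, and independence of the choice of section is the special case of overlapping charts equal to one another. Finally $\omega$ is closed since it is locally $d(\imath_{\smallS_{\si}}^{*}\theta_{L})$, and it is non-degenerate because each $\imath_{\smallS_{\si}}$ is a (bi-holomorphic) isomorphism $\cA_{U}\xrightarrow{\sim}T^{*}U$ carrying $\omega_{U,\si}$ to the Liouville form $\omega_{L}$, which is symplectic; hence $\omega$ is a holomorphic symplectic form on $\cA$. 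I expect the only genuine subtlety — already dispatched above — to be the bookkeeping of which sections are holomorphic versus merely smooth, and confirming that Lemma~\ref{unique}'s criterion still applies when one of the sections in the difference is $\smallS_{NS}$; everything else is a direct assembly of Lemmas~\ref{taut}, \ref{unique}, \ref{NS-qu} and Proposition~\ref{qu-rec}.
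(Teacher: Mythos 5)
Your proof is correct and follows essentially the same route as the paper: writing $\smallS_{\si_{1}}-\smallS_{\si_{2}}$ as a difference of the two terms $\smallS_{NS}-\smallS_{\si_{j}}$, invoking Proposition~\ref{qu-rec} to see each is $\del$-closed, and then applying Lemma~\ref{unique}. The extra remarks on smooth versus holomorphic sections and on closedness and non-degeneracy of the glued form are sound but not needed beyond what the paper records.
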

\begin{proof} We need to show that $\omega_{U_{1},\si_{1}}=\omega_{U_{2},\si_{2}}$ on $U_{1}\cap U_{2}\neq\emptyset$.  By Proposition \ref{qu-rec}, we have 
$$\del(\smallS_{NS}-\smallS_{\si_{1}})=0\quad\text{and}\quad \del(\smallS_{NS}-\smallS_{\si_{2}})=0$$
in coordinate charts $U_{1}$ and $U_{2}$, so that $\del(\smallS_{\si_{1}}-\smallS_{\si_{2}})=0$ on $U_{1}\cap U_{2}$.  Now the result follows form
Lemma \ref{unique}.
\end{proof}

\begin{remark}\label{pull-back-smooth} Narasimhan-Seshadri and quasi-unitary connections determine local non-holomorphic sections $\jmath_{\si}=\smallS_{NS}- \smallS_{\si} : U\to T^{*}U$ of the bundle $T^{*}\cN\to\cN$.
Explicitly,
$$\jmath_{\si}(E_{\si_{\mu}})=\mathcal{A}(g^{\mu})=\mathcal{A}(f^{\hat\mu}g f^{-1}_{\mu})$$ 
(cf. with the global section of the bundle $T^{*}T_{g}\to T_{g}$ in \cite[Remark 4]{Tak}). It follows from Proposition \ref{qu-rec} and Lemma \ref{taut} that
$$\jmath^{*}_{\si}(\omega_{L})=-2\sqrt{-1}\,\omega_{NAB}.$$
\end{remark}
%%%%%%%%%%%%%%%%%%%%%%%%%%
%%%%%%%%%%%%%%%%%%%%%%%%%%
\section{Goldman form on $\cA$}\label{final}

Let $\nabla(t)$ be a smooth curve in the complex manifold $\cA$ of zero curvature connections, $\nabla(0)=\nabla\in\cA(E)$ for some $\{E\}\in\cN$, and let $v=\dot\nabla$ be its tangent vector at $t=0$, considered as a vector in the holomorphic tangent space to $\cA$ at $\nabla$. Let $\si=\cQ(\nabla)$ be the holonomy of the connection $\nabla$ in $E$. Under the Riemann-Hilbert correspondence, we have a curve $\cQ(\nabla(t))$ in the character variety $\cK_{\CC}$ with the holomorphic tangent vector $\cQ_{*}(v)\in T_{\si}\cK_{\CC}$ at $t=0$. 

On the other hand, over a neighborhood $U$ we have an isomorphism $\imath=\imath_{\smallS_{\si}}: \cA_{U}\xrightarrow{\sim} T^{*}U$ (see \S\S\,\ref{holo-rep}-\ref{hol sections} and \S\,\ref{L-A}) and we can consider, for small $t$,
the image of the curve $\nabla(t)$ in $T^{*}U$ with the holomorphic tangent vector $\imath_{*}(v)$. 
According to Proposition \ref{independence}, the pullback of the Liouville  form $\omega_{L}$ to $\cA_{U}$
does not depend on a choice of the isomorphism $\imath$.

%%%%%%%%%%%%%%
\subsection{Differential of the Riemann-Hilbert correspondence} \label{differential} Here we explicitly describe the differential of the map $hol=\cQ\circ\imath^{-1}: T^{*}U\to\cK_{\CC}$ for $\imath=\imath_{\smallS_{\si}}$ corresponding to the isomorphism $E\simeq E_{\si}$, as in \S\S\,\ref{holo-rep}-\ref{hol sections}. 

Namely, let  $E(t)$ be the projection of the curve $\nabla(t)$ in $\cA$ to $U$.  Each bundle in $U$ is realized as $E_{\si_{\mu}}$, and the curve 
$E(t)$ at $t=0$ has a tangent vector $\mu\in H_{\mathrm{DR}}^{0,1}(X,\mathrm{End}\,E_{\si})$. Under the isomorphism $\imath: \cA_{U}\xrightarrow{\sim} T^{*}U$ 
we have 
$$\nabla(t)=d+A_{t}, \quad\text{where}\quad A_{t}\in H_{\mathrm{dR}}^{1,0}(X,\mathrm{End}\,E_{\si_{t\mu}}),\quad A_{0}=0,$$ 
and without changing the tangent vector $v=\dot\nabla$ we can assume that $E(t)=E_{\si_{t\mu}}$. 
Put 
$$B_{t}=\mathrm{Ad}f^{-1}_{t\mu}\cdot A_{t}\in \Omega^{1,0}(X,\mathrm{End}\,E_{\si}).$$
Then
$$T_{\nabla}\cA\ni v\mapsto \imath_{*}(v)=(P(\dot{B}),\mu)\in T_{(0,E_{\si})}(T^{*}\cN),$$
where
$$\dot{B}=\left.\frac{d}{dt}\right|_{t=0}B_{t} \in \Omega^{1,0}(X,\mathrm{End}\,E_{\si}),$$
and $P$ is the projection operator onto $H_{\mathrm{dR}}^{1,0}(X,\mathrm{End}\,E_{\si})$.

The image of the curve $A_{t}$ under the map $hol=\cQ\circ\imath^{-1}$ is a curve $\si_{t}$ in the character variety $\cK_{\CC}$  --- a family of irreducible representations $\si_{t}:\Gamma\to\mathrm{GL}(n,\CC)$, defined by 
$$g_{t}(\g z)=\si_{t\mu}(\g)g_{t}(z)\si_{t}(\g)^{-1},$$
where the functions $g_{t}$ satisfy the parallel transport equation
\begin{equation}\label{transport-1}
\frac{dg_{t}}{dz}(z)+A_{t}(z)g_{t}(z)=0,
\end{equation}
and $A_{t}=A_{t}(z)dz$.
The holomorphic tangent vector to $\si_{t}$ at $t=0$ is $\chi\in H^{1}(\Gamma,\frak{g}_{\mathrm{Ad}\,\si})$, where
\begin{equation}\label{chi-curve}
\chi(\g)=\left.\frac{d}{dt}\right|_{t=0}\si_{t}(\gamma)\si(\g)^{-1},
\end{equation}
so
$$hol_{*}(P(\dot{B}),\mu)=\chi. $$

Cocycle $\chi$ admits the following expression in terms of $\dot{B}$ and $\mu$. 
Put $h_{t}(z)=f_{t\mu}^{-1}(z)g_{t}(z)$. It follows from \eqref{ode} and \eqref{transport-1} that the functions $h_{t}(z)$ satisfy a system of partial differential equations
\begin{align*}
\frac{\del h_{t}}{\del z}(z) &=-B_{t}(z)h_{t}(z),\\
\frac{\del h_{t}}{\del\bar{z}}(z) &=-t\mu(z) h_{t}(z),
\end{align*}
where $B_{t}=B_{t}(z)dz$, 
and have the property
\begin{equation}\label{h-t}
h_{t}(\g z)=\si(\g)h_{t}(z)\sigma_{t}(\g)^{-1}.
\end{equation}
Differentiating this system with respect to $t$ at $t=0$ and putting
$$\dot{h}(z)=\left.\frac{d}{dt}\right|_{t=0}h_{t}(z),$$
we get the system
\begin{align}
\frac{\del\dot{h}}{\del z}(z) & =-\dot{B}(z),\label{h-1}\\
\frac{\del \dot{h}}{\del\bar{z}}(z) &=-\mu(z),\label{h-2}
\end{align}
and it follows from \eqref{h-t} that
\begin{equation}\label{h-dot}
\chi(\g)=\si(\gamma)\dot{h}(z)\si(\g)^{-1}-\dot{h}(\g z).
\end{equation}

Thus we expressed $\chi=hol_{*}(P(\dot{B}),\mu)$ in terms of the solution of the system \eqref{h-1}--\eqref{h-2}.

%%%%%%%%%%%%%%%%%
\subsection{Pullback of Goldman form} \label{pullback} Here we prove our main result.
\begin{theorem}\label{main}
The pullback to $\cA$ of the Goldman symplectic form $\omega_{G}$ on $\cK_{\CC}$ by the Riemann-Hilbert correspondence is $-2\sqrt{-1}$ times the pullback to $\cA$of the Liouville symplectic form
$\omega_{L}$ on $T^{*}\cN$  by the quasi-unitary connections.
\end{theorem}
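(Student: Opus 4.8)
The plan is to verify the claimed identity of holomorphic $(2,0)$-forms pointwise on $\cA$; this is legitimate since $\cQ^{*}\omega_{G}$ is globally defined on $\cA$ and $\omega$ is globally defined by Proposition~\ref{independence}. Fix $\nabla\in\cA(E)$, let $\sigma=\cQ(\nabla)$ be its holonomy, and work in a coordinate chart $U$ realizing $E\simeq E_{\sigma}$ in which $\nabla$ becomes the quasi-unitary connection $d+0$, so that $\imath=\imath_{\smallS_{\sigma}}$ carries $\nabla$ to the zero section --- this is exactly the set-up of \S\,\ref{differential}. For holomorphic tangent vectors $v_{1},v_{2}\in T_{\nabla}\cA$ with associated data $(\mu_{i},\dot B_{i},\dot h_{i},\chi_{i})$ as in \S\,\ref{differential}, I would first record both sides. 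On the Liouville side $\imath_{*}(v_{i})=(P(\dot B_{i}),\mu_{i})$, so \eqref{L-N} gives
\[
\omega(v_{1},v_{2})=\frac{\sqrt{-1}}{2}\int_{X}\bigl(P(\dot B_{1})\wedge\mu_{2}-P(\dot B_{2})\wedge\mu_{1}\bigr)=\frac{\sqrt{-1}}{2}\int_{X}\bigl(\dot B_{1}\wedge\mu_{2}-\dot B_{2}\wedge\mu_{1}\bigr),
\]
the second equality because pairing a $(1,0)$-form against a harmonic $(0,1)$-form ignores its non-harmonic part (Stokes applied to $\tr(\phi\,\mu)$, where $\phi$ is the potential of that part, using that this $1$-form descends to $X$). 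On the Goldman side $\cQ_{*}(v_{i})=\chi_{i}$ with $\chi_{i}$ given by \eqref{h-dot}, hence it suffices to prove
\[
\omega_{G}(\chi_{1},\chi_{2})=\int_{X}\bigl(\dot B_{1}\wedge\mu_{2}-\dot B_{2}\wedge\mu_{1}\bigr),
\]
after which the constant comes out of $-2\sqrt{-1}\cdot\tfrac{\sqrt{-1}}{2}=1$.

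The crucial observation, forced by the system \eqref{h-1}--\eqref{h-2}, is that $\dot h_{i}$ is simultaneously a $\del$-potential for $\dot B_{i}$ and a $\delb$-potential for $\mu_{i}$, i.e. $d\dot h_{i}=-\dot B_{i}-\mu_{i}$ on coefficient functions. Therefore
\[
d\,\tr\bigl(\dot h_{1}\,d\dot h_{2}\bigr)=\tr\bigl(d\dot h_{1}\wedge d\dot h_{2}\bigr)=\dot B_{1}\wedge\mu_{2}-\dot B_{2}\wedge\mu_{1}
\]
as $(1,1)$-forms on $X$ --- the only terms surviving $dz\wedge dz=d\bar z\wedge d\bar z=0$ --- and the left side makes sense on $X$ although $\dot h_{i}$ does not, since under the trace the combination is $\Gamma$-invariant. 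This is the mixed-type, $\mathrm{End}\,E$-valued analogue of the Riemann bilinear relations used in Theorem~\ref{th: E-S}. Integrating over a fundamental domain $F$ and applying Stokes,
\[
\int_{X}\bigl(\dot B_{1}\wedge\mu_{2}-\dot B_{2}\wedge\mu_{1}\bigr)=\int_{\del F}\tr\bigl(\dot h_{1}\,d\dot h_{2}\bigr).
\]

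To evaluate this boundary integral I would use the explicit structure \eqref{f-domain}--\eqref{S-lambda} of $\del F$ associated with the standard generators $a_{k},b_{k}$, exactly as in the proof of Theorem~\ref{th: E-S}. On each paired edge, substituting $z=\lambda_{i}w$ and using $\dot h_{1}(\gamma z)=\mathrm{Ad}\,\sigma(\gamma)\dot h_{1}(z)-\chi_{1}(\gamma)$ together with $\lambda_{i}^{*}(d\dot h_{2})=\mathrm{Ad}\,\sigma(\lambda_{i})\,d\dot h_{2}$, the $\mathrm{Ad}\,\sigma$-automorphic part cancels between $S_{i}$ and $\lambda_{i}(S_{i})$ and only the cocycle term survives, contributing $\tr\bigl(\chi_{1}(\lambda_{i})\,\sigma(\lambda_{i})\,\bigl(\int_{S_{i}}d\dot h_{2}\bigr)\,\sigma(\lambda_{i})^{-1}\bigr)$. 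Since $\dot h_{i}(z_{0})=0$ --- which holds automatically as $h_{t}(z_{0})=I$ --- the cocycle property \eqref{par-1} yields $\int_{A_{k}}d\dot h_{2}=-\sigma(R_{k-1})\!\cdot\!\chi_{2}(a_{k})$ and $\int_{B_{k}}d\dot h_{2}=-\sigma(R_{k})\!\cdot\!\chi_{2}(b_{k})$. Feeding these in, together with $\lambda_{k}=\alpha_{k}^{-1}$, $\lambda_{k+g}=\beta_{k}^{-1}$, the inverse-cocycle identity $\chi_{1}(\gamma^{-1})=-\mathrm{Ad}\,\sigma(\gamma^{-1})\chi_{1}(\gamma)$, and cyclicity of the trace, the $A$-edges produce $B(\chi_{1}(\alpha_{k}),\sigma(R_{k-1})\!\cdot\!\chi_{2}(a_{k}))$ and the $B$-edges produce $-B(\chi_{1}(\beta_{k}),\sigma(R_{k})\!\cdot\!\chi_{2}(b_{k}))$. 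Summing over $k=1,\dots,g$ reproduces \eqref{dual}, i.e. $\omega_{G}(\chi_{1},\chi_{2})$, which finishes the proof.

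The main obstacle is bookkeeping, not conceptual: one must track precisely the matrix orderings under the trace, the orientation signs distinguishing $S_{k}=A_{k}$ from $S_{k+g}=-B_{k}$, and the $\mathrm{Ad}\,\sigma$-twists interacting with the dual-generator relations \eqref{dual-gen}, so that the boundary sum assembles into \eqref{dual} with the correct overall normalization; this is the step demanding the most care. A minor separate point, needed for the first display, is the harmless replacement of $\dot B_{i}$ by $P(\dot B_{i})$ inside the integral. Everything else is dictated by the formulas of \S\,\ref{differential} and the definition \eqref{L-N} of $\omega_{L}$.
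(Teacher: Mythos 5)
Your proposal is correct and follows essentially the same route as the paper: reduce to $\int_{X}(\dot B_{1}\wedge\mu_{2}-\dot B_{2}\wedge\mu_{1})$, recognize $\dot h_{i}$ as a simultaneous potential via \eqref{h-1}--\eqref{h-2}, apply Stokes on the fundamental domain, and assemble the boundary contributions into \eqref{dual}. The only (harmless) divergences are that you justify the replacement of $P(\dot B_{i})$ by $\dot B_{i}$ explicitly, and you use the normalization $\dot h_{i}(z_{0})=0$ to kill the extra boundary terms, whereas the paper carries them along and verifies directly that $I_{1}+I_{2}=0$.
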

\begin{proof} Let $v_{1}$ and $v_{2}$ be holomorphic tangent vectors to $\cA$ at a point $A\in\cA(E)$, zero curvature connection in $E$ with the holonomy $\si\in\cK_{\CC}$, and let
$\imath_{*}(v_{1})=(P(\dot{B}_{1}),\mu_{1})$ and $\imath_{*}(v_{2})=(P(\dot{B}_{2}),\mu_{2})$. We have, using \eqref{L-N} and equations \eqref{h-1}--\eqref{h-2},
\begin{align*}
\omega_{L}(\imath_{*}(v_{1}),\imath_{*}(v_{2})) & =\frac{\sqrt{-1}}{2}\int_{X}\left\{P(\dot{B}_{1})\wedge\mu_{2}-P(\dot{B}_{2})\wedge\mu_{1}\right\}\\
& =\frac{\sqrt{-1}}{2}\iint_{F}\tr\left(\dot{B}_{1}(z)\mu_{2}(z)-\dot{B}_{2}(z)\mu_{1}(z)\right)dz\wedge d\bar{z}\\
& =\frac{\sqrt{-1}}{2}\iint_{F}\tr\left(\frac{\del\dot{h}_{1}}{\del z}(z)\frac{\del\dot{h}_{2}}{\del\bar{z}}(z)-\frac{\del\dot{h}_{2}}{\del z}(z)\frac{\del\dot{h}_{1}}{\del\bar{z}}(z)\right)dz\wedge d\bar{z}\\
& =\frac{\sqrt{-1}}{2}\iint_{F}\tr (d\dot{h}_{1}\wedge d\dot{h}_{2}) =\frac{\sqrt{-1}}{2}\int_{\del F}\tr( \dot{h}_{1}d\dot{h}_{2}).
\end{align*}
As in the proof of Theorem \ref{th: E-S} we continue, using \eqref{f-domain} and \eqref{h-dot},
\begin{align*}
\int_{\del F}\tr( \dot{h}_{1}d\dot{h}_{2}) &=\sum_{i=1}^{2g}\left\{\int_{S_{i}}\tr( \dot{h}_{1}d\dot{h}_{2}) -\int_{\lambda_{i}(S_{i})}\tr( \dot{h}_{1}d\dot{h}_{2})\right\}\\
& =\sum_{i=1}^{2g}\int_{S_{i}}\tr\left(\chi_{1}(\lambda_{i})\, \mathrm{Ad}\,\si(\lambda_{i})\cdot d\dot{h}_{2}\right) \\
&=-\sum_{i=1}^{2g}\tr\left(\chi_{1}(\lambda^{-1}_{i})\int_{S_{i}}d\dot{h}_{2}\right).
\end{align*}
Using again  \eqref{h-dot}, we get
\begin{align*}
\int_{S_{k}}d\dot{h}_{2} &
=\int_{R_{k-1}z_{0}}^{R_{k-1}a_{k}z_{0}}d\dot{h}_{2}=\mathrm{Ad}\,\si(R_{k-1})\cdot\int_{z_{0}}^{a_{k}z_{0}}d\dot{h}_{2}\\
& =-\mathrm{Ad}\,\si(R_{k-1})\cdot\chi_{2}(a_{k}) +(\mathrm{Ad}\,\si(R_{k-1}a_{k})-\mathrm{Ad}\,\si(R_{k-1}))\cdot \dot{h}_{2}(z_{0})
\end{align*}
and
\begin{align*}
\int_{S_{k+g}}d\dot{h}_{2} &=-\int_{R_{k}z_{0}}^{R_{k}b_{k}z_{0}}d\dot{h}_{2}=-\mathrm{Ad}\,\si(R_{k})\cdot\int_{z_{0}}^{b_{k}z_{0}}d\dot{h}_{2}\\
& =\mathrm{Ad}\,\si(R_{k})\cdot\chi_{2}(b_{k}) -(\mathrm{Ad}\,\si(R_{k}b_{k})-\mathrm{Ad}\,\si(R_{k}))\cdot \dot{h}_{2}(z_{0}),
\end{align*}
Using \eqref{S-lambda} and \eqref{par-1}, we obtain
\begin{gather*}
-\sum_{k=1}^{g}\tr\left\{\chi_{1}(\lambda^{-1}_{k})\int_{S_{k}}d\dot{h}_{2}\right\}=\sum_{k=1}^{g}\tr\left(\chi_{1}(\alpha_{k})\,\mathrm{Ad}\,\si(R_{k-1})\cdot\chi_{2}(a_{k})\right) +I_{1}
\end{gather*}
where
\begin{gather*}
I_{1}=-\sum_{k=1}^{g}\tr\left\{\left(\chi_{1}(a_{k}^{-1}R_{k-1}^{-1}\alpha_{k})-\chi_{1}(a_{k}^{-1}R_{k-1}^{-1})-\chi_{1}(R_{k-1}^{-1}\alpha_{k})+\chi_{1}(R_{k-1}^{-1})\right)\dot{h}_{2}(z_{0})\right\}.
\end{gather*}
Similarly,
\begin{gather*}
-\sum_{k=1}^{g}\tr\left\{\chi_{1}(\lambda^{-1}_{k})\int_{S_{k+g}}d\dot{h}_{2}\right\}=-\sum_{k=1}^{g}\tr\left(\chi_{1}(\beta_{k})\,\mathrm{Ad}\,\si(R_{k})\cdot\chi_{2}(b_{k})\right) +I_{2},
\end{gather*}
where
\begin{gather*}
I_{2}=\sum_{k=1}^{g}\tr\left\{\left(\chi_{1}(b_{k}^{-1}R_{k}^{-1}\beta_{k})-\chi_{1}(b_{k}^{-1}R_{k}^{-1})-\chi_{1}(R_{k}^{-1}\beta_{k})+\chi_{1}(R_{k}^{-1})\right)\dot{h}_{2}(z_{0})\right\}.
\end{gather*}
Finally, using  \eqref{dual-gen}--\eqref{gen-dual} and relations $R_{k}b_{k}a_{k}=R_{k-1}a_{k}b_{k}$, we obtain
\begin{gather*}
I_{1}+I_{2}=\sum_{k=1}^{g}\tr\Big\{\left(\chi_{1}(a_{k}^{-1}R_{k-1}^{-1})-\chi_{1}(a_{k}^{-1}b_{k}^{-1}R_{k}^{-1})+\chi_{1}(b_{k}^{-1}R_{k}^{-1})-\chi_{1}(R_{k-1}^{-1})\right.\\
\left.\left.+\chi_{1}(b_{k}^{-1}a_{k}^{-1}R_{k-1}^{-1})-\chi_{1}(b_{k}^{-1}R_{k}^{-1})-\chi_{1}(a^{-1}_{k}R_{k-1}^{-1})+\chi_{1}(R_{k}^{-1})\right)\dot{h}_{2}(z_{0})\right\} =0,
\end{gather*}
so by \eqref{dual}
\begin{gather*}
\int_{\del F}\tr( h_{1}dh_{2}) = \frac{\sqrt{-1}}{2}\omega_{G}(\chi_{1},\chi_{2}).\qedhere
\end{gather*}
\end{proof}

\begin{remark}The proof of Theorem \ref{main} is a simplified version of the proof of Theorem 1 in \cite{Tak}, adapted for the case of vector bundles.

\end{remark}
\begin{remark} Theorem \ref{main} is analogue of `Kawai theorem' --- a theorem in \cite{Kawai} that the pullback by the monodromy map of the Goldman symplectic form on the $\mathrm{PSL}(2,\CC)$-character variety to the bundle of projective structures on the Riemann surfaces, identified with the holomorphic cotangent bundle to the Teichm\"{u}ller space by the Bers section, coincides with the Liouville symplectic form. However,  the proof of formula (5.4) in \cite{Kawai}  is not
correct since the functions $\dot{w}^{\nu}(z)$ on $\HH$ (using Ahlfors notation as in \cite{Kawai, Tak}) do not transform like vector fields under the group $\Gamma$. For harmonic $\nu$ formula (5.4) immediately follows from Ahlfors formula that $\dot{w}^{\nu}_{zzz}(z)=0$ and (5.4) trivially holds, while for general $\nu$ 
it is precisely the quasi-Fuchsian reciprocity, proved in \cite{McM,TT}. We refer the reader to \cite{Tak} 
for the proof of `Kawai theorem' in the spirit of Riemann bilinear relations. 
\end{remark}

The main results, Theorems \ref{E-S-G} and \ref{main} and Proposition \ref{independence},  can be succinctly combined with Remark \ref{pull-back-smooth} in the form of following commutative diagram, where all maps are symplectomorphisms
\begin{equation}\label{sum-diagram}
\begin{tikzcd}
(\cA, -2\sqrt{-1}\,\imath_{\si}^{*}(\omega_{L}))\arrow{r}{\cQ} & (\cK_{\CC},\omega_{G})\\
(\cN, -4\,\omega_{NAB})\arrow{u}{\smallS_{NS}}\arrow{r}{\imath} & (\cK_{\RR},\omega_{G})\arrow[u, hook]
\end{tikzcd}.
\end{equation}
Here $\smallS_{NS}=\imath_{\si}^{-1}\circ\jmath_{\si}$ and $\cK_{\RR}\hookrightarrow\cK_{\CC}$ is the inclusion map.
%%%%%%%%%%%%%%%%
\subsection{Generalizations} \label{generalization} It is straightforward to generalize the obtained results to the moduli space $\cN(n,d)$ of stable vector bundles of arbitrary rank $n$ and degree $d$ on $X$ (it is sufficient to consider $-n<d\leq 0$). As in \cite{NS2},  we realize $X\simeq \Gamma\bk\HH$ as an orbifold Riemann surface with conical singularity of order $n$ at a given point $x_{0}\in X$, so the Fuchsian group $\Gamma$ contains additional elliptic generator $\g_{0}$ of order $n$ with the fixed point $z_{0}\in\HH$ that projects on $x_{0}$. The Narasimhan-Seshadri theorem says that a stable bundle $E$ of rank $n$ and degree $d$  is obtained from an irreducible representation $\rho:\Gamma\to\mathrm{U}(n)$ such that $\rho(\gamma_{0})=\zeta^{-d}I$, where $\zeta$ is a primitive $n$-th root of unity.  

Correspondingly, a fiber $\cA(E)$ of the bundle $\cA$ is the affine space of connections in $E$ compatible with the holomorphic structure and having a constant central curvature as in \cite{Donaldson}, or equivalently, connections $\nabla$ in $E$ such that their curvature (considered as a current) satisfies
$$\frac{\sqrt{-1}}{2\pi}\int_{X}f\wedge\nabla^{2}=\mu(E)\tr\,\!f(x_{0}),$$
for all $f\in\Omega^{0}(X,\mathrm{End}\,E)$, as in \cite{ZT}. Concretely, realizing $E$ as a quotient bundle $E_{\rho}$, such connections are $d+A$, where $A$ is a meromorphic matrix-valued automorphic form of weight $2$ for $\Gamma$ with the representation $\mathrm{Ad}\,\rho$, having only simple poles at $\Gamma\cdot z_{0}$ with residue $-\mu(E) I$, where $I$ is $n\times n$ identity matrix. Corresponding $\mathrm{GL}(n,\CC)$-character variety consists of equivalence classes of irreducible representations $\si: \Gamma\to\mathrm{GL}(n,\CC)$ such that $\si(\g_{0})=\zeta^{-d}I$.  All results in the paper and their proofs generalize verbatim to this case.

Finally, obtained results can be also generalized to the moduli spaces of parabolic bundles. In this case, one needs to use parabolic cohomology as in \cite{Meneses, Tak}, the matrix-valued cusp forms of weight $2$
as in \cite{TZ2, Meneses}, and define a character variety by specifying conjugacy classes for the parabolic generators of $\Gamma$, as in \cite{GHJW}.

%%%%%%%%%%%%%%%%%%
%%%%%%%%%%%%%%%%%


\begin{thebibliography}{99}
\bibitem{Ahlfors} \textit{L.V. Ahlfors}, Some remarks on Teichm\"{u}ller's space of Riemann surfaces, Ann. of Math. \textbf{74}
(1961), 171--191.
\bibitem{Ahlfors-2} \textit{L.V. Ahlfors}, \emph{Lectures on Quasiconformal Mappings}, Wadsworth \& Brooks/Cole Advanced Books \& Software, Monterey, CA, 1987, with the assistance of Clifford J. Earle Jr. Reprint of the 1966 original.
\bibitem{AB} \textit{M.F. Atiyah} and \textit{R. Bott}, The Yang-Mills equations over Riemann surfaces, Phil. Trans. R. Soc. London, \textbf{A308} (1982), 523--615.
\bibitem{Bertola} \textit{M. Bertola}, \textit{C. Norton} and \textit{G. Ruzza}, Higgs fields, non-abelian Cauchy kernels and the Goldman symplectic structure, preprint 2021, \url{http://arxiv.org/abs/2102.09520}.
\bibitem{Donaldson} \textit{S.K. Donaldson},  A new proof of a theorem of Narasimhan and Seshadri, J. Diff. Geometry \textbf{18} (1983), 269--277.
\bibitem{Goldman1} \textit{ W.M. Goldman}, The symplectic nature of fundamental groups of surfaces, Advances in Math.
\textbf{54} (1984), 200--225.
\bibitem{Goldman2}  \textit{W.M. Goldman}, Topological components of spaces of representations, Invent. Math. \textbf{93} (1988), 557--607.
\bibitem{Gunning}  \textit{R.C. Gunning}, Lectures on Vector Bundles over Riemann Surfaces, Math. Notes \textbf{6}, Princeton University Press, Princeton, NJ, 1967.
\bibitem{GHJW}  \textit{K. Guruprasad},  \textit{J. Huebschmann},  \textit{L. Jefferey} and  \textit{A. Weinstein}, Group systems, groupoids and moduli spaces of parabolic bundles, Duke Math. J. \textbf{89} (1997), 377--412.
\bibitem{Hejhal} \textit{D. Hejhal}, The Selberg Trace Formula for $\mathrm{PSL}(2,\RR)$.Volume 2. Lecture Notes in Mathematics, vol. \textbf{1001}, Springer, Berlin, Heidelberg (1983).
\bibitem{Kawai}  \textit{S. Kawai}, The symplectic nature of the space of projective connections on Riemann surfaces, Math. Ann. \textbf{305}:(1) (1996), 161--182.
\bibitem{Krichever} \textit{I. Krichever}, Isomonodromy equations on algebraic curves, canonical transformations and Witham equations, Moscow Math. J. \textbf{4} (2002), 717--752.
\bibitem{Loustau}  \textit{B. Loustau}, The complex symplectic geometry of the deformation space of complex projective structures, Geom. Topol. \textbf{19} (2015) 1737--1775.
\bibitem{McM}  \textit{C.T. McMullen}, The moduli space of Riemann surfaces is K\"{a}hler hyperbolic, Ann. Math. (2) \textbf{151}(1) (2000), 327--357.
\bibitem{Meneses}  \textit{C. Meneses}, On Shimura's isomorphism and $(\Gamma,G)$-bundles on the upper half plane,
 Contributions of Mexican mathematicians abroad in pure and applied mathematics, 
 101--113, Contemp. Math., \textbf{709}, Aportaciones Mat., Amer. Math. Soc., Providence, RI,  2018.
\bibitem{NS1}  \textit{M.S. Narasimhan} and  \textit{C.S. Seshadri}, Holomorphic vector bundles on a compact Riemann surface,
Math. Ann. \textbf{155} (1964), 69--80.
\bibitem{NS2}  \textit{M.S. Narasimhan} and  \textit{C.S. Seshadri}, Stable and unitary vector bundles on a compact Riemann
surface, Ann. of Math. (2) \textbf{82} (1965), 540--567.
\bibitem{Narasimhan}  \textit{M.S. Narasimhan}, Elliptic operators and differential geometry of moduli spaces of
vector bundles on compact Riemann surfaces, In: Proceedings of the International conference
on functional analysis and related topics. Tokyo, 68--71, 1969.
\bibitem{Shimura}  \textit{G. Shimura}, Introduction to the Arithmetic Theory of Automorphic Functions,  Iwanami Shoten Publishers and Princeton University Press, 1971.
\bibitem{TT}  \textit{L.A. Takhtajan} and  \textit{L.P. Teo}, Liouville action and Weil-Petersson metric on deformation spaces, global Kleinian reciprocity and holography, Commun. Math. Phys. \textbf{239} (2003), 183-240.
\bibitem{TZ2}  \textit{L.A. Takhtajan} and  \textit{P. Zograf}, The first Chern form on moduli of parabolic bundles, Math. Ann. \textbf{341} (2008), 113--135.
\bibitem{Tak}  \textit{L.A. Takhtajan}, On Kawai theorem for orbifold Riemann surfaces, Math. Ann. \textbf{375} (2019), 923--947.
\bibitem{ZT}  \textit{P.G. Zograf} and  \textit{L.A. Takhtadzyan}, On the geometry of moduli spaces of vector bundles over a Riemann surface, Izv. Akad. Nauk SSSR Ser. Mat. \textbf{53} (1989), no. 4, 753--770 (Russian), English translation in Math. USSR-Izv. \textbf{35} (1990), no. 1, 83--100.
\end{thebibliography}
\end{document}